\title[Numerical Range]
{Numerical range  for random matrices}
\author[B. Collins, P. Gawron, A. E. Litvak, K. {\.Z}yczkowski]
{Beno\^\i{}t Collins${}^{1}$, Piotr Gawron${}^{2}$,
Alexander E. Litvak${}^{3}$, \\ Karol {\.Z}yczkowski${}^{4,5}$}
\address{
D\'epartement de Math\'ematique et Statistique, Universit\'e d'Ottawa,
585 King Edward, Ottawa, ON, K1N6N5 Canada,
WPI Advanced Institute for Materials Research Tohoku University, Mathematics Unit
2-1-1 Katahira, Aoba-ku, Sendai, 980-8577 Japan
and
CNRS, Institut Camille Jordan Universit\'e  Lyon 1,
France}
 \email{bcollins@uottawa.ca}
\address{Institute of Theoretical and Applied Informatics, Polish Academy
of Sciences, Ba{\l}tycka 5, 44-100 Gliwice, Poland} \email{gawron@iitis.pl}
\address{Dept.~of Math.~and Stat.~Sciences,
         University of Alberta, Edmonton, Alberta, Canada, T6G 2G1,}
\email{aelitvak@gmail.com}
\address{Institute of Physics, Jagiellonian University,
   ul.\ Reymonta 4, 30-059 Krak\'ow, Poland}
\email{karol@tatry.if.uj.edu.pl}
\address{Center for Theoretical Physics, Polish Academy of Sciences,
al.\ Lotnik\'ow 32/46, 02-668 Warszawa, Poland}
 \date{February 28, 2014}
\theoremstyle{plain}
\newtheorem{lemma}{Lemma}[section]
\newtheorem{theorem}[lemma]{Theorem}
\newtheorem{proposition}[lemma]{Proposition}
\theoremstyle{definition}
\theoremstyle{remark}
\newcommand{\Sd}{{\mathbb{S}}}
\newcommand{\E}{{\mathbb{E}}}
\newcommand{\PP}{{\mathbb{P}}}
\newcommand{\C}{{\mathbb{C}}}
\newcommand{\R}{{\mathbb{R}}}
\newcommand\eps{\varepsilon}
\def\r{\right}
\def\lam{\lambda}
\def\imgscale{1}
\begin{document}

\begin{abstract} We analyze the numerical range of high-dimensional random matrices,
obtaining limit results and corresponding quantitative estimates in the non-limit case.
For a large class of random matrices their numerical range is shown to converge to a disc.
In particular, numerical range of complex Ginibre matrix almost surely converges
to the disk of radius $\sqrt{2}$. Since the spectrum of non-hermitian random matrices from the Ginibre
ensemble lives asymptotically in a neighborhood of the unit disk, it follows that the outer
belt of width $\sqrt{2}-1$ containing no eigenvalues can be seen as a quantification the
non-normality of the complex Ginibre random matrix. We also show that the numerical range
of upper triangular Gaussian matrices converges to the same disk of radius $\sqrt{2}$, while
all eigenvalues are equal to zero and we prove that the operator norm of such matrices converges
to $\sqrt{2e}$.
\end{abstract}

\maketitle

\footnotetext[1]{Research partially supported by ERA, NSERC discovery grant, and AIMR.}
\footnotetext[2]{Research partially supported by the Grant N N516 481840
                  financed  by Polish National Centre of Science.}
\footnotetext[3]{Research partially supported by  the
                 E.W.R. Steacie Memorial Fellowship.}
\footnotetext[4]{Research partially supported by the
                 Grant DEC-2011/02/A/ST1/00119 financed by
                 Polish National Centre of Science.}

\section{Introduction}

In this paper we are interested in the numerical range of large random matrices.
In general, {\it the numerical range} (also called {\it the field of values})
of an $N\times N$ matrix is defined as $W(X)=\{ (Xy, y) \, :\,  ||y||_2=1 \}$
(see e.g. \cite{GR97, HJ2, Ki51}).
This notion was introduced almost a century ago
and it is known by the celebrated Toeplitz-Hausdorff theorem
\cite{Hausdorff, Toeplitz}
that $W(X)$ is a compact convex set in  $\mathbb{C}$.
A common convention
 to denote the numerical range by $W(X)$
 goes back to the German term ``Wertevorrat" used by Hausdorff.

For any $N\times N$ matrix $X$ its numerical range $W(X)$ clearly
contains all its eigenvalues $\lambda_i$, $i\leq N$.
If $X$ is normal, that is $XX^*=X^*X$, then its
numerical range is equal to the convex hull of its spectrum,
$W(X)= \Gamma(X):={\rm conv}(\lambda_1, \dots , \lambda_N)$.
The converse is valid if and only if $N \le 4$ (\cite{MoMa, Jo76}).

For a non-normal matrix $X$ its numerical range is typically
larger than $\Gamma(X)$ even  in the case $N=2$. For example,
consider the Jordan matrix of order two,
$$
J_2 = \left[\begin{array}{cc}
                  0  & 1 \\
                  0  & 0 \\
\end{array}\right].
$$
Then both eigenvalues of $J_2$ are equal to zero,
while $W(J_2)$ forms a disk $D(0,1/2)$.

We shall now turn our attention to numerical range of random matrices.
Let $G_N$ be a complex random matrix of order $N$ from the {\sl Ginibre ensemble},
that is an $N\times N$ matrix with
i.i.d centered complex normal entries of variance $1/N$.
It is known that the limiting spectral distribution
$\mu_N$ converges to the uniform distribution on the unit disk
 with probability one
 (cf. \cite{Bai, Gin, Gir, GT, TV1, TV2}).
It is also known that the operator norm goes to $2$ with probability
one. This is directly related to the fact
that the level density of the Wishart matrix $G_N G_N^*$
is asymptotically described by the Marchenko-Pastur law,
supported on $[0,4]$, and the squared largest singular value
of $G_N$ goes to $4$ (\cite{HT03}, see also \cite{Gem} for the real case).

As the complex Ginibre matrix  $G_N$ is generically non-normal,
the support $\Gamma$ of its spectrum is typically smaller than the numerical range $W$.
Our results imply that the ratio between the area of $W(G_N)$ and $\Gamma(G_N)$
converges to $2$ with probability one. Moreover, in the case of strictly upper
triangular matrix $T_N$ with Gaussian entries (see below for precise definitions)
we have that the area of $W(T_N)$ converges to  2, while clearly $\Gamma(T_N)=\{0\}$.

The numerical range of a matrix $X$ of size $N$ can be considered
as a projection of the set of density matrices of size $N$,
$$
  Q_N=\{\varrho: \varrho=\varrho^*, \ \varrho\ge 0, \ {\rm Tr}\varrho=1 \},
$$
onto a plane, where this projection is
given by the (real) linear map ${\rho\mapsto {\rm Tr}\varrho X}$.
More precisely, for any matrix $X$ of size $N$
there exists a real affine rank $2$ projection $P$ of the set ${ Q}_N$,
whose image is congruent to the numerical range $W(X)$ \cite{DGH+11}.

Thus our results on numerical range of random matrices
contribute to the understanding of the geometry of the convex set
of quantum mixed states for large $N$.

Let $d_H$ denotes the Hausdorff distance.
Our main result, Theorem~\ref{mega}, states the following:

\smallskip

\noindent
{\it If random matrices $X_N$ of order $N$ satisfy for every real $\theta$
$$
     \lim _{N\to \infty} \|\mbox{\rm Re }  (e^{i\theta} X_N)\|   = R
$$
then with probability one
$$
  \lim _{N\to \infty} d_H ( W(X_N),  D(0, R)) =0.
$$
}

We apply this theorem to a large class of random matrices. Namely,
let $x_{i,i}$, $i\geq 1$, be i.i.d. complex random variables with
finite second moment,  $x_{i, j}$, $i\ne j$, be i.i.d. centered
complex random variables with finite fourth moment,
and all these variables are independent. Assume $\E |x_{1,2}|^2 = \lambda ^2$
for some $\lambda >0$. Let $ X_N = N^{-1/2}\, \{x_{i, j}\}_{i, j \leq N}, $
and $Y_N$ be the matrix whose entries  above the main diagonal are the same as
entries of $X_N$ and all other entries are zeros. Theorem~\ref{realgin} states
that
$$
   d_H ( W(X_N),  D(0,\sqrt{2}\lambda )) \to 0 \quad \mbox{ and } \quad
   d_H ( W(Y_N),  D(0, \lambda ))\to 0 .
$$
In particular, if $X_N$ is a complex Ginibre matrix $G_N$ or a real
Ginibre matrix $G_N^{\R}$ (i.e. with centered normal entries of variance $1/N$)
and $T_N$ is a strictly triangular matrices $T_N$ with i.i.d centered complex
normal entries of variance $2/(N-1)$  (so that $\E \mbox{Tr} X_N X_N^* =
\E \mbox{Tr} T_N T_N^* =N$) then  with probability one
$$
   d_H ( W(G_N),  D(0,\sqrt{2} )) \to 0 \quad \mbox{ and } \quad
   d_H ( W(T_N),  D(0,\sqrt{2} ))\to 0 .
$$

We also provide corresponding quantitative estimates on the rate of the convergence
in the case of $G_N$ and $T_N$.

A related question to our study is the
limit behavior of the operator (spectral)
norm $\|T_N\|$ of a random triangular matrix,
which can be used to characterize its non-normality.
As we mentioned above, it is known that with probability one
\begin{equation}\label{normlim}
 \lim_{N\to \infty} \|G_N\| =  2  .
\end{equation}
It seems that the limit behavior of $\|T_N\|$ has not been investigated yet,
although its limiting counterpart has been extensively studied
by Dykema and Haagerup in the framework of investigations around
the invariant subspace problem.
In the last section (Theorem~\ref{trinor}), we prove that   with probability one
\begin{equation}\label{trnorm}
 \lim_{N\to \infty} \|T_N\| =  \sqrt{2 e} .
\end{equation}
Note that in Section~\ref{secttri} this fact is formulated and proved
in another normalization.

Our proof here is quite indirect and relies on strong convergence for
random matrices established by \cite{HTh}. In particular, our proof does
not provide any quantitative estimates for the rate of convergence.
It would be interesting to obtain corresponding deviation inequalities.
We would like to mention that very recently the empirical eigenvalue measures
for large class of symmetric random matrices of the form $X_N X_N^*$, where
$X_N$ is a random triangular matrix, has been investigated (\cite{LP}).

The paper is organized as follows. In Section~\ref{illustr}, we provide some
preliminaries and numerical illustrations.  In Section~\ref{basic}, we provide
basic facts on the numerical range and on the matrices formed using Gaussian random
variables. The main section, Section~\ref{mainresults}, contains the results on convergence
of the numerical range of random matrices mentioned above (and the corresponding quantitative
estimates). Section~\ref{concrem} suggests a possible extension of the main theorem, dealing
with a more general case, when the limit of
$\|\mbox{\rm Re }  (e^{i\theta} X_N)\|$ is a (non-constant) function of $\theta$. Finally, in
Section~\ref{secttri}, we provide the proof of (\ref{trnorm}).

\section{Preliminaries and numerical illustrations}
\label{illustr}

By $\xi$, we will denote a centered complex Gaussian random variable, whose variance may change
from line to line.  When (the variance of) $\xi$ is fixed,
$\xi_{ij}$, $i,j\geq 1$ denote independent copies of $\xi$.
Similarly, by $g$  we will denote a centered real Gaussian random variable, whose variance may change
from line to line.  When (the variance of) $g$ is fixed,
$g_{ij}$, $i,j\geq 1$ denote independent copies of $g$.

We deal with  random matrices $X_N$ of size $N$.
To set the scale we are usually going to normalize random matrices
by fixing their expected Hilbert-Schmidt norms to be equal to $N$, i.e.
$\E \|X_N\| _{\rm HS}^2=\E {\rm Tr} X_N X_N^*=N$.
We study the following ensembles.

\begin{enumerate}
\item \label{enscomgin}
Complex Ginibre matrices $G_N$ of order $N$ with entries $\xi_{ij}$, where
$\E |\xi _{ij}|^2 =1/N$. As we mention in the introduction, by the  circular law,
the spectrum of $G_N$ is asymptotically contained in the unit disk.
Note $\E \|G_N\| _{\rm HS}^2=N$.

\item \label{ensrealgin}
Real Ginibre matrices $G_N^{\mathbb{R}}$  of order $N$ with entries $g_{ij}$, where
$\E |g _{ij}|^2 =1/N$.
Note $\E \|G_N^{\mathbb{R}}\| _{\rm HS}^2=N$.

\item  \label{enstri}
Upper triangular random matrices $T_N$  of order $N$ with entries $T_{ij}=\xi_{ij}$ for $i <j$
    and $T_{ij}=0$ elsewhere, where $\E |\xi _{ij}|^2 =2/(N-1)$.
Clearly, all eigenvalues of $T_N$  equal to zero.
Note $\E \|T_N\| _{\rm HS}^2=N$.

\item  \label{ensdiag}
 Diagonalized Ginibre matrices, $D_N = Z G_N Z^{-1}$  of order $N$, so that
   $D_{kl}=\lambda_k \delta_{kl}$ where $\lambda_k$, $k=1,\dots, N$,
   denote complex eigenvalues of  $G_N$. Note that $G_N$ is diagonalizable
   with probability one. In order to ensure the uniqueness of the probability
   distribution on diagonal matrices,
   we assume that it is invariant under conjugation by permutations.
  Note that  integrating over the Girko circular law
  one gets the average squared eigenvalue of the complex Ginibre matrix,
  $\langle |\lambda|^2\rangle =\int_{0}^1 2x^3 dx=1/2$. Thus,
 $\E \|D_N\| _{\rm HS}^2=N/2$.

\item \label{ensunit}
   Diagonal unitary matrices $U_N$  of order $N$ with entries $U_{kl}=\exp(i \phi_k) \delta_{kl}$,
   where $\phi_k$ are independent uniformly distributed on $[0, 2 \pi)$ real random
   variables.

\end{enumerate}


The structure of some of these matrices is exemplified below for the case $N=4$.
Note that the variances of $\xi$ are different in the case of  $G_4$ and
in the case of $T_4$. To lighten the notation they are depicted by the
same symbol $\xi$,  but  entries are independent.

$$
G_4 = \left[\begin{array}{cccc}
                  \xi  & \xi & \xi & \xi \\
                  \xi  & \xi & \xi & \xi \\
                  \xi  & \xi & \xi & \xi \\
                  \xi  & \xi & \xi & \xi \\
\end{array}\right], \
T_4 = \left[\begin{array}{cccc}
                    0  & \xi & \xi &  \xi \\
                    0  &  0  & \xi &  \xi \\
                    0  &  0  &  0  &  \xi \\
                    0  &  0  &  0  &   0  \\
\end{array}\right], \
 D_4  = \left[\begin{array}{cccc}
                    \lambda_1 & 0  &   0  &   0 \\
                    0  & \lambda_2 &   0  &   0 \\
                    0  &  0  &  \lambda_3 &   0 \\
                    0  &  0  &   0  &  \lambda_4 \\
\end{array}\right] .
\label{struct}
$$



We  will study the following parameters of  a
given (random) matrix $X$:

\begin{enumerate}[(a)]

\item
 {\sl the numerical radius}
$r(X)= {\rm max}\{|z|: z \in W(X)\}$,
\item
{\sl the spectral radius} $\rho(X)=  |\lambda_{\max}|$,
where $\lambda_{max}$ is the leading eigenvalue of $X$
with the largest modulus,

\item
 {\sl the operator (spectral) norm} equal to the
largest singular value, $\|X\|=\sigma_{\max}(X)=\sqrt{\lambda_{max} (XX^*)}$
(and equals to the operator norm of $X$, considered as an operator
$\ell_2^N \to \ell_2^N$),
\item {\sl the non-normality measure}
$\mu_3(X):=(||X||^2_{\rm HS} - \sum_{i=1}^N |\lambda_i|^2)^{1/2}$.

\end{enumerate}

The latter quantity, used by Elsner and Paardekooper \cite{EP87},
is based on the Schur lemma:
As the squared Hilbert-Schmidt norm of a matrix
can be expressed by its singular values,
 $||X||^2_{\rm HS}=\sum_{i=1}^N \sigma_i^2$,
the measure $\mu_3$ quantifies the difference between
the average squared singular value and the average squared absolute
value of an eigenvalue, and vanishes for normal matrices.
Comparing the expectation values for the squared norms
of a random Ginibre matrix $G_N$ and a diagonal matrix $D_N$
containing their spectrum we establish the following statement.

The squared non-normality coefficient $\mu_3$  for a
complex Ginibre matrix $G_N$ behaves asymptotically as
\begin{equation}
\label{mu3gini}
\mathbb{E} \mu_3^2(G_N) = \mathbb{E} \|G_N\|^2_{\rm HS} - \mathbb{E} \|D_N\| _{\rm HS}^2 = N/2.
\end{equation}
Since all eigenvalues of random triangular matrices are equal to zero
an analogous results for the ensemble
of upper triangular random matrices reads
 $\mathbb{E} \mu_3^2(T_N) = N$.


Figure \ref{fig1} shows the numerical range of the complex Ginibre matrices
of ensemble (\ref{enscomgin}), which tends asymptotically to the disk of radius
$\sqrt{2}$ -- see Theorem~\ref{realgin}.
As the convex hull of the spectrum, $\Gamma(G_N)$, goes to
the unit disk, the ratio of their area tends to $2$
and characterizes the non-normality of a generic Ginibre matrix.
By the non-normality belt we mean the set difference $W(X)\setminus \Gamma(X)$,
which contains no eigenvalues.

As  $N$ grows to infinity,
spectral properties of the real Ginibre matrices of ensemble (\ref{ensrealgin})
become analogous to the complex case.
By Theorem~\ref{realgin},
in both cases numerical range converges to $D(0, \sqrt{2})$
and the spectrum is supported by the unit disk.
The only difference is the symmetry of the
spectrum with respect to the real axis
and a clustering of eigenvalues along the real axis
for the real case.

\begin{figure}[htbp]
\begin{center}
\includegraphics[scale=\imgscale]{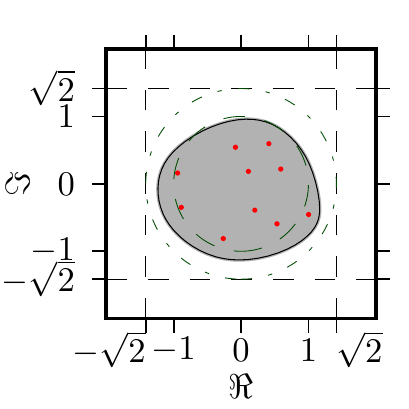}
\includegraphics[scale=\imgscale]{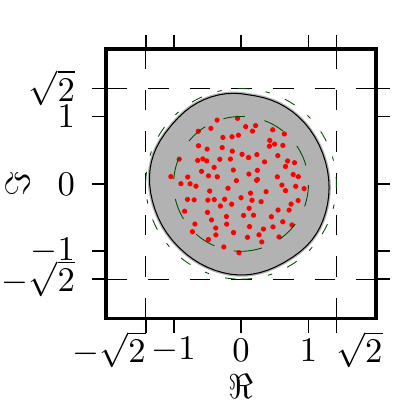}
\includegraphics[scale=\imgscale]{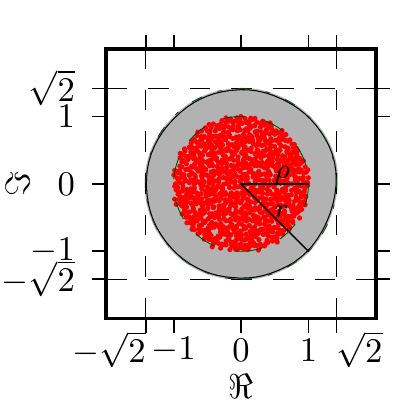}
\caption{Spectrum (dots) and numerical range (dark convex set)
of the complex  Ginibre matrices of sizes  $N=10, 100$ and $1000$.
The spectrum is asymptotically contained in the unit disk
while numerical range converges to a disk of radius $r=\sqrt{2}$
denoted in the figures. Note the outer ring of the range is
the non-normality belt of width $\sqrt{2}-1$ (it contains no
eigenvalues).
}
\label{fig1}
\end{center}
\end{figure}

Figure \ref{fig2} shows analogous examples
of diagonal matrices $D$ with the Ginibre spectrum -- ensemble (\ref{ensdiag}).
Diagonal matrices are normal, so the numerical range equals to the
support of the spectrum and thus converges to the unit disk.
Note that this property hold also for a
``normal Ginibre ensemble"
of matrices of the kind $G'=VDV^*$,
where $D$ contains the spectrum of a Ginibre matrix,
while $V$ is a random unitary matrix
drawn according to the Haar measure.

\begin{figure}[htbp]
\begin{center}
\includegraphics[scale=\imgscale]{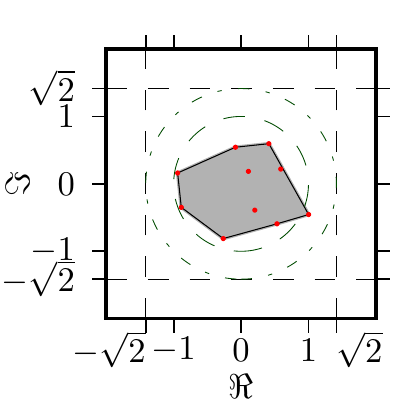}
\includegraphics[scale=\imgscale]{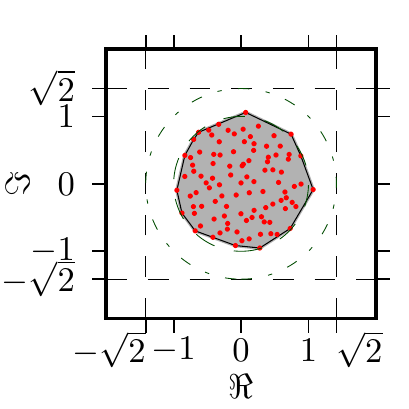}
\includegraphics[scale=\imgscale]{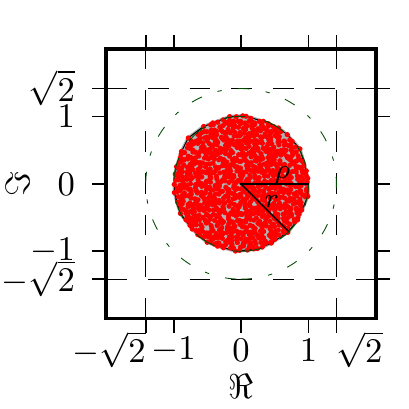}
\caption{As in Fig. \ref{fig1},  for ensemble of
diagonal matrices $D_N$ containing spectrum of Ginibre matrices
of sizes  $N=10,100$ and $1000$.
Numerical range of these normal matrices coincides with
the convex hull of their spectrum.
}
\label{fig2}
\end{center}
\end{figure}

Analogous results for the  upper triangular matrices $T$ of ensemble
(\ref{enstri}) shown in Fig.\ref{fig3}. The numerical range asymptotically
converges to the disk of radius $\sqrt{2}$ with probability one -- see
Theorem~\ref{realgin}.

As all eigenvalues of $T$ are zero, 
the asymptotic properties of the spectrum and numerical
range of $T$ become identical with these of a Jordan matrix $J$
of the same order $N$ rescaled by $\sqrt{2}$.
By construction $J_{km}=1$ if $k+1=m$ and zero elsewhere
for $k,m=1,\dots , N$. It is known \cite{Wu98} that numerical range
of a Jordan matrix $J$ of size $N$ converges to the unit disk as
$N \to \infty$.

\begin{figure}[htbp]
\begin{center}
\includegraphics[scale=\imgscale]{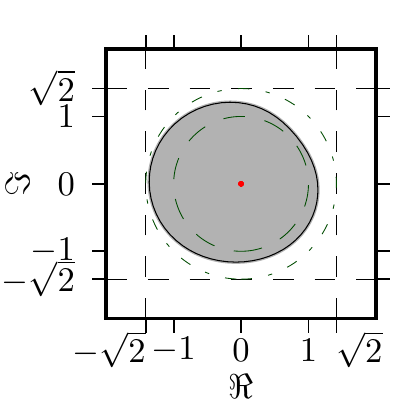}
\includegraphics[scale=\imgscale]{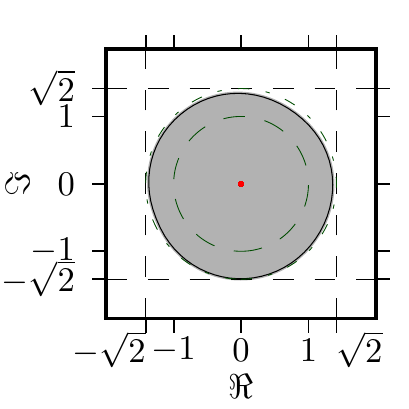}
\includegraphics[scale=\imgscale]{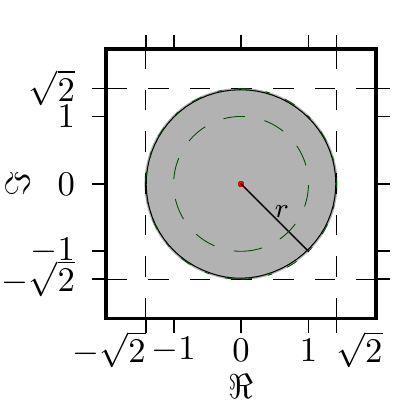}
\caption{As in Fig. \ref{fig1}, for
upper triangular  random matrices  $T_N$ of sizes $N=10,100$ and $1000$,
for which all eigenvalues are equal to zero and the
numerical range converges to the disk of radius $\sqrt{2}$.
}
\label{fig3}
\end{center}
\end{figure}

In the table below we listed asymptotic predictions
for the operator (spectral) norm $\|X\|$, the numerical radius $r(X)$, the
spectral radius $\rho(X)$ and the squared non-normality parameter,
$\bar{\mu}_3^2= \mathbb{E}(\mu_3^2)$,
of generic matrices pertaining to the ensembles investigated.

\bigskip
\begin{center}
{\renewcommand{\arraystretch}{1.4}
\begin{tabular}[c]{ c c c c c }\hline
Ensemble  &  \quad $\|X\|$ \quad & \quad  $r(X)$ \quad
& \quad  $\rho(X)$ \quad  & \quad  $\bar{\mu}^2 _3(X)$ \quad \\
\hline
 Ginibre    $G$  & $2$        & $\sqrt{2}$ & $1$ & $  N/2$ \\
 Diagonal   $D$  & $1$        & $1$        & $1$ & $0$          \\
 Triangular $T$  & $\sqrt{2 e}$ & $\sqrt{2}$        & $0$ & $ N$   \\
\hline
\end{tabular}
} 
\end{center}

\bigskip

Consider a matrix $X$ of order $N$, normalized as Tr$XX^*=N$.
Assume that the matrix is diagonal,
so that its numerical range $W(X)$ is formed by the convex hull of the diagonal entries.
Let us now modify the matrix $X$,
writing $Y=\sqrt{1-a}X+\sqrt{a}T$,
where $T$ is a strictly upper triangular random matrix normalized as above and
$0 \le a \le 1$. Note Tr$YY^*=N$ as well.
Rescaling $X$ by a number $\sqrt{1-a}$
smaller than one
and adding an off-diagonal part $\sqrt{a}T$
increases the non-normality belt of $Y$,
i.e. the set $W(Y)\setminus \Gamma(Y)$.
The larger relative weight of the off-diagonal part,
the larger squared non-normality index,
$\mu_3^2(Y)= \|Y\|^2_{\rm HS} - \sum_{i=1}^N |Y_{ii}|^2= N-(1-a)N=aN$
and the larger the non-normality belt of the numerical range.
In the limiting case  $a\to 1$ the off-diagonal part $\sqrt{a}T$
dominates the matrix $Y$. In particular, if $T=T_N$ of ensemble (\ref{enstri})
then its numerical range converges to
the disk of radius $\sqrt{2}$ as $N$ grows to infinity.

To demonstrate this construction in action we plotted in
Fig. \ref{fig4} numerical range of an exemplary random matrix
$Y'=D_N+\frac{1}{\sqrt{2}}T_N$, which contains the spectrum of
the complex Ginibre matrix $G_N$ at the diagonal, and the matrix $T_N$
in its upper triangular part. The relative weight $a=1/\sqrt{2}$
is chosen in such a way that Tr$Y'{Y'}^*=N$. Thus $Y'$ displays similar properties
to the complex Ginibre matrix: its numerical range is close to a disk of
radius $r=\sqrt{2}$,
while the support of the spectrum is close to the unit disk.
This observation is related to the fact  \cite{Mehta}
that bringing the complex Ginibre matrix by a unitary rotation to its triangular
Schur form, $S:=UGU^*=D+T$, one assures that
the diagonal matrix $D$ contains spectrum of $G$,
while $T$ is an upper triangular matrix
containing independent Gaussian random numbers.

\begin{figure}[htbp]
\begin{center}
\includegraphics[scale=\imgscale]{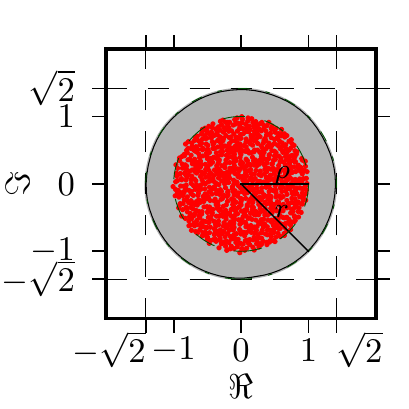}
\includegraphics[scale=\imgscale]{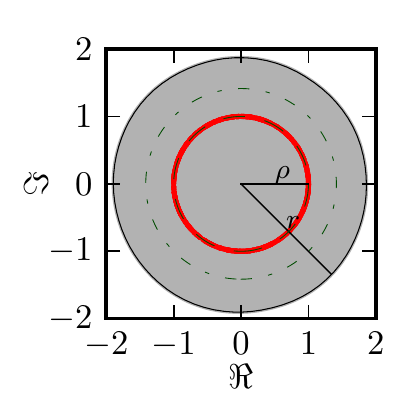}
\caption{As in Fig. \ref{fig1}, for
a) $D_N+\frac{1}{\sqrt{2}}T_N$ and
b) $U_N+T_N$ of size $N=1000$.
}
\label{fig4}
\end{center}
\end{figure}

Another illustration of the non-normality belt is presented in
Fig. \ref{fig4}b. It shows the numerical range of the sum of a
diagonal random unitary matrix $U_N$ of ensemble (\ref{ensunit}),
with all eigenphases drawn independently according to a uniform distribution,
with the upper triangular matrix $T_N$ of ensemble (\ref{enstri}). All eigenvalues
of this matrix belong to the unit circle, while presence of
the triangular contribution increases the numerical radius $r$
and forms the non-normality belt.
Some other examples of numerical range computed
numerically for various ensembles of random matrices can be found
in \cite{Paw13}.

\section{Some basic facts and notation}
\label{basic}

In this paper, $C_0$, $C_1$, ..., $c_1$, $c_2$, ... denote absolute positive constants, whose
value can change from line to line.
Given a square matrix $X$, we denote
$$
    \mbox{Re } X = \frac{X+X^*}{2} \quad \quad \mbox{ and } \quad \quad  \mbox{Im } X = \frac{X-X^*}{2i},
$$
so that $X= \mbox{Re } X + i \mbox{ Im } X$ and both $\mbox{Re } X$ and $\mbox{Im } X$ are self-adjoint matrices. Then it
is easy to see that
$$
     \mbox{Re } W(X) = W( \mbox{Re } X) \quad \quad \mbox{ and } \quad \quad \mbox{Im }  W(X) = W(\mbox{Im } X).
$$

Given $\theta \in [0, 2\pi]$, denote $X_{\theta}:= e^{i\theta }X$ and  by $\lam_{\theta}$ denote the
maximal eigenvalue of $\mbox{Re } X_{\theta}$. It is known (see e.g. Theorem~1.5.12 in \cite{HJ2}) that
\begin{equation} \label{support}
  W(X) = \bigcap_{0\leq \theta \leq 2\pi} H_{\theta},
\end{equation}
where
$$
   H_{\theta} = e^{-i\theta} \left\{ z\in \C \, : \, \mbox{Re } z \leq  \lam _\theta  \right\}.
$$
Our results for random matrices are somewhat similar, however we use the norm $\|X_{\theta}\|$
instead of its maximal eigenvalue. Repeating the proof of (\ref{support})
(or adjusting the proof of Proposition~\ref{inclusion} below), it is not difficult
to see that
\begin{equation} \label{inclus}
   W(X) \subset K(R),
\end{equation}
where $K(R)$ is a star-shaped set defined by
\begin{equation} \label{radial}
 K(R) := \{ \lambda e^{- i\theta}\, \|X_{\theta}\| \, \, : \, \, \lambda \in [0, 1],\,  \theta \in [0, 2\pi)\}.
\end{equation}
Below we provide a complete proof of corresponding results for random matrices.
Note that $K(R)$ can be much larger than $W(X)$. Indeed, in the case of the identity operator
$I$ the numerical range is a singleton, $W(I)=\{1\}$, while the set $K(R)$ is defined by the
equation $\rho \leq |\cos t|$ (in the polar coordinates).

%
%
%

\subsection{GUE}
\label{GUE}

We say that a Hermitian $N\times N$ matrix $A=\{A_{i,j}\}_{i,j}$ pertains to
Gaussian Unitary Ensemble (GUE) if {\bf a.} its entries $A_{i,j}$'s are independent
for $1\leq i\leq j\leq N$, {\bf b.} the entries $A_{i,j}$'s for $1\leq i< j\leq N$
are complex  centered Gaussian random variables of variance $1$
(that is the real and imaginary parts are independent centered Gaussian of variance
$1/2$), {\bf c.} the entries $A_{i,i}$'s for $1\leq i\leq N$ are real centered
Gaussian random variables of variance $1$.

Clearly, for the complex Ginibre matrix $G_N$ its real part,
$Y_N:= Re(G_N)$, is a $(2N)^{-1/2}$ multiple of a GUE. It is known that
with probability one $\|Y_N\| \to \sqrt{2}$
(see e.g. Theorem~5.2 in \cite{BS} or Theorem~5.3.1 in \cite{PS}).
We will also need the following quantitative estimates.
In \cite{Au, Le1, Le2, LR} it was shown that
for GUE, normalized as  $Y_N$, one has for every $\eps\in (0, 1]$,
$$
  \PP \left( \|Y_N\| \geq \sqrt{2} +\eps \r) \leq C_0 \exp(-c_0 N \eps ^{3/2}).
$$
Moreover, in \cite{LR} it was also shown that for $\eps\in (0, 1]$,
$$
  \PP \left( \|Y_N\| \leq \sqrt{2} - \eps \r) \leq C_1 \exp(-c_1 N^2 \eps ^{3}).
$$
Note that $C_1 \exp(-c_1 N^2 \eps ^{3})\leq C_2 \exp(-c_1 N \eps ^{3/2})$.
Thus, for $\eps\in (0, 1]$,
\begin{equation}\label{maxev}
  \PP \left( |\|Y_N\| - \sqrt{2}| > \eps \r) \leq  C_3 \exp(-c_2 N \eps ^{3/2})
\end{equation}
(cf. Theorem~2.7 in \cite{DS}).
It is also well known (and follows from concentration) that  there exists two
absolute constants $c_4$ and $C_4$ such that
\begin{equation}\label{norm}
  \PP \left( \|G_N\|  \geq 2.1 \r) \leq  C_4 \exp(-c_4 N) .
\end{equation}

%
%
%

\subsection{Upper triangular matrix}
\label{uptrima}

Let $g_i$, $h_i$, $i\geq 1$, be independent $\mathcal{N}(0,1)$ real random variables.
It is well-known (and follows from the Laplace transform) that
$$
  \mathbb{E} \max _{i\leq N} |g_i| \leq \sqrt{2\ln (2N)}.
$$
Since $\|x\|_{\infty}\leq \|x\|_2$, the classical Gaussian concentration
inequality (see \cite{CIS} or inequality (2.35)  in \cite{Le}) implies that
for every $r>0$,
\begin{equation}\label{maxgauss}
   \PP \left(\max _{i\leq N} |g_i| > \sqrt{2\ln (2N)} + r \right)\leq e^{-r^2/2}.
\end{equation}

Recall that $T_N$ denotes the upper triangular $N\times N$ Gaussian random matrix normalized
such that $\mathbb{E} T_N T_N^* =N$, that is $(T_N)_{ij}$ are independent complex
Gaussian random variables of variance $2/(N-1)$ for $1\leq i<j\leq N$ and $0$ otherwise.
Note that $\mbox{Re } T_N$ can be presented as $Z_N/\sqrt{2(N-1)}$, where
 $Z_N$ is a complex Hermitian $N\times N$ matrix with zero on the diagonal and
independent complex Gaussian random variables of variance one   above the diagonal.
Let $A_N$ be distributed as GUE (with $g_i$'s on the diagonal) and $V_N$ be the diagonal matrix
with the same diagonal as $A_N$.  Clearly, $Z_N=A_N-V_N$. Therefore, the triangle inequality
and  (\ref{maxev}) yield that for every $\eps\in (0, 1]$
\begin{equation}\label{devtrig}
  \PP \left( \left|\frac{1}{\sqrt N} \|Z_N\| - 2 \right| > \eps \r) \leq  C \exp(-c N \eps ^{3/2}) ,
\end{equation}
where $C$ and $c$ are absolute positive constants (formally, applying the triangle inequality,
 we should ask $\eps > \sqrt{\ln(2N)/N}$,
but if $\eps \leq \sqrt{\ln(2N)/N}$  the right hand side becomes large than 1,
by an appropriate choice of the constant $C$). In particular, the Borel-Cantelli lemma
implies that with probability one $\|Z_N\|/\sqrt{N} \to  2$
(alternatively one can apply Theorem~5.2 from \cite{BS}).


\section{Main results}
\label{mainresults}

Our first main result is

\begin{theorem}\label{mega}
Let $R>0$. Let $\{X_N\}_N$ be a sequence of complex random $N\times N$  matrices
such that for every $ \theta\in \R$  with probability one
$$
     \lim _{N\to \infty} \|\mbox{\rm Re }  (e^{i\theta} X_N)\|   = R.
$$
Then with probability one
$$
  \lim _{N\to \infty} d_H \left( W(X_N),  D(0, R)\right) =0.
$$
Furthermore, if there exists $A\geq  \max\{R, 1\}$ such that for every $N\geq 1$,
$$
  \PP\left( \|X_N\| > A \right) \leq p_N
$$
and for every $\eps\in (0,1/2)$, $N\geq 1$, $\theta\in \R$,
$$
   \PP\left( \left|\, \|\mbox{\rm Re }  (e^{i\theta} X_N)\| - R\right| > \eps \right) \leq q_N (\eps)
$$
then for every positive $\eps \leq \min\{1/2, \sqrt{R/(A+1)}\}$ and every $N$ one has
$$
  \PP\left( d_H(W(X_N), D(0, R) > 4 A \eps \r)  \leq
   p_N + 7 R \eps^{-2} \   q_N (\eps^2) .
$$
\end{theorem}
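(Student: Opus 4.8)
The plan is to translate the statement into the language of support functions and then to trade the uncountable family of directions for a finite net. Recall from \eqref{support} that $W(X_N)=\bigcap_\theta H_\theta$ is the convex body whose support function in the direction $e^{-i\theta}$ equals $\lambda_\theta=\lambda_{\max}(\mathrm{Re}(e^{i\theta}X_N))$, while $D(0,R)$ is the disc with constant support function $R$. Since the Hausdorff distance between two compact convex sets is exactly the sup-norm distance between their support functions, it suffices to produce a high-probability event on which $\sup_\theta|\lambda_\theta-R|$ is of order $4A\eps$. I would establish the outer inequality $\lambda_\theta\le R+\cdots$ and the inner inequality $\lambda_\theta\ge R-\cdots$, uniformly in $\theta$, as two separate tasks.

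Uniformity over $\theta$ is handled by discretization. On the event $\{\|X_N\|\le A\}$, whose complement has probability at most $p_N$, one has $W(X_N)\subset D(0,A)$, so $r(X_N)\le A$ and the map $\theta\mapsto\lambda_\theta$ is $A$-Lipschitz. I would fix an equally spaced net $\theta_1,\dots,\theta_m$ of $[0,2\pi)$ with $m\asymp\eps^{-2}$, fine enough that the Lipschitz error between any direction and the nearest net point is at most $\eps^2$, matching the spectral scale. A union bound using the second hypothesis then shows that, off an event of probability at most $m\,q_N(\eps^2)$, one has $|\,\|\mathrm{Re}(e^{i\theta_j}X_N)\|-R|\le\eps^2$ for every $j$; this is the source of the term $7R\eps^{-2}q_N(\eps^2)$.

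The outer inclusion is immediate: $\lambda_\theta\le\|\mathrm{Re}(e^{i\theta}X_N)\|$, so on the good event, after the Lipschitz correction, $\lambda_\theta\le R+2\eps^2$ for all $\theta$; equivalently, by \eqref{inclus}--\eqref{radial}, $W(X_N)\subset K(R)\subset D(0,R+2\eps^2)$. The inner inclusion is the crux. From $\|\mathrm{Re}(e^{i\theta_j}X_N)\|\ge R-\eps^2$ I would extract a unit vector $y$ and hence a point $w_j=(X_N y,y)\in W(X_N)$ with $|\mathrm{Re}(e^{i\theta_j}w_j)|\ge R-\eps^2$, whence $|w_j|\ge R-\eps^2$; combined with the outer bound $|w_j|\le R+2\eps^2$ this forces $w_j$ to lie within $O(\sqrt R\,\eps)$ of the circle $\partial D(0,R)$ — the square root being exactly what converts the $\eps^2$ spectral scale into the $\eps$ geometric scale appearing in $4A\eps$. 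Taking the convex hull of the $w_j$ over the net (whose angular gaps are $\ll\eps/\sqrt R$) then fills $D(0,R-O(\sqrt R\,\eps))$, giving the inner inclusion. The almost-sure statement follows from the quantitative one by choosing $\eps=\eps_N\to0$ slowly enough that $\sum_N\big(p_N+7R\eps_N^{-2}q_N(\eps_N^2)\big)<\infty$ and invoking Borel--Cantelli.

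The step I expect to be the genuine obstacle is the inner inclusion, and precisely the sign ambiguity hidden inside it. The hypothesis controls only $\|\mathrm{Re}(e^{i\theta}X_N)\|=\max\{\lambda_\theta,\lambda_{\theta+\pi}\}$, so the extremal vector above yields a boundary point near $e^{-i\theta}R$ \emph{or} near its antipode $-e^{-i\theta}R$, and the norm alone cannot decide which. For a general convex body this really is fatal — a half-disc of radius $R$ satisfies $\max\{\lambda_\theta,\lambda_{\theta+\pi}\}\equiv R$ yet is far from $D(0,R)$ — so the inner bound cannot follow formally from the displayed hypothesis and must use that \emph{both} extreme eigenvalues of $\mathrm{Re}(e^{i\theta}X_N)$ tend to $R$, i.e.\ $\lambda_\theta\to R$ for every individual $\theta$. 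For the matrices to which the theorem is applied this two-sided control is automatic, since $\mathrm{Re}(e^{i\theta}X_N)$ is (asymptotically) a GUE-type matrix whose limiting spectrum is the symmetric semicircle, forcing $\lambda_{\max}=-\lambda_{\min}=\|\cdot\|\to R$. Once $\lambda_\theta\to R$ is available for each $\theta$, the antipodal ambiguity disappears, the points $w_j$ populate the whole circle, and the convex-hull argument goes through.
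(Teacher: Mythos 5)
Your proposal follows the same architecture as the paper's proof: a finite net in $\theta$ at spectral scale $\varepsilon^2$ of cardinality $O(\varepsilon^{-2})$ (exactly the source of the factor $7R\varepsilon^{-2}q_N(\varepsilon^2)$; the paper takes a net of cardinality at most $2.2\pi R/\varepsilon$ and then substitutes $\varepsilon\mapsto\varepsilon^2$), the event $\{\|X_N\|\le A\}$ of probability at least $1-p_N$ to make $\theta\mapsto\|\mathrm{Re}(e^{i\theta}X_N)\|$ Lipschitz and pass from the net to all directions, the outer inclusion via the direction attaining the numerical radius, the inner inclusion via extremal unit vectors at net directions with the cap estimate converting support-function accuracy $\varepsilon^2$ into boundary accuracy $O(\sqrt{R}\,\varepsilon)$, and convexity of $W(X_N)$ to fill the disc. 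Moreover, the obstruction you flag is real and is precisely the point the paper's proof glosses over: the identity $\lambda(\theta,N):=\|\mathrm{Re}(e^{i\theta}X_N)\|=\sup\{\mathrm{Re}(e^{i\theta}X_Ny,y):\|y\|_2=1\}$ asserted there equates the norm of a self-adjoint matrix with its largest eigenvalue, which holds only when $\lambda_{\max}\ge-\lambda_{\min}$; in general $\|\mathrm{Re}(e^{i\theta}X_N)\|=\max\{\lambda_{\max}(\theta),\lambda_{\max}(\theta+\pi)\}$, and when the norm is carried by $\lambda_{\min}$ no unit vector attains it as a value of the quadratic form, so the paper's choice of $y_1$ in the inner inclusion is unjustified. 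Your half-disc example is correct: normal matrices with eigenvalues filling the half-disc of radius $R$ satisfy $\|\mathrm{Re}(e^{i\theta}X_N)\|\to R$ for every $\theta$, yet $W(X_N)$ converges to the half-disc, at Hausdorff distance $R$ from $D(0,R)$. So the theorem needs, and its proof implicitly uses, the two-sided hypothesis $\lambda_{\max}(\mathrm{Re}(e^{i\theta}X_N))\to R$ for each $\theta$; your observation that this is automatic in all the applications ($e^{i\theta}X_N$ equidistributed with $X_N$ for $G_N$, $G_N^{\mathbb{R}}$, $T_N$, with GUE-type real parts and symmetric limiting spectra) is the right repair, and under it your convex-hull argument goes through.

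The one genuine gap is in your deduction of the qualitative (first) half of the theorem. There, only almost-sure convergence of $\|\mathrm{Re}(e^{i\theta}X_N)\|$ for each fixed $\theta$ is assumed; no tail bounds $p_N$, $q_N(\cdot)$ exist, so you cannot choose $\varepsilon_N\to0$ making $\sum_N\bigl(p_N+7R\varepsilon_N^{-2}q_N(\varepsilon_N^2)\bigr)$ finite, and Borel--Cantelli is unavailable. The paper instead intersects the countably many almost-sure events attached to a fixed net $\mathcal{N}$ (together with $\theta=0$ and $\theta=-\pi/2$, which give $\limsup_N\|X_N\|\le 2R$ and hence an eventual Lipschitz constant $A$), and then runs the deterministic net argument realization by realization for each $\varepsilon$; your machinery yields this verbatim, so simply do not route the almost-sure statement through the quantitative one. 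A final remark: your opening observation that $d_H$ of two compact convex sets equals the sup-distance of their support functions is actually stronger than what the paper exploits — once the hypothesis is stated for $\lambda_{\max}$, it gives $d_H(W(X_N),D(0,R))\le (A+2)\varepsilon^2$ on the same event, so the $\sqrt{\cdot}$ loss in the bound $4A\varepsilon$ is an artifact of the pointwise boundary argument rather than a necessity.
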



\begin{proof}
Fix positive $\eps \leq \min\{1/2, R/(A+1)\}$. Since the real part of a matrix is a
self-adjoint operator we have
$$
  \lam(\theta, N) :=\| \mbox{Re } (e^{i\theta} X_N )\| = \sup\{ \mbox{Re }
  (e^{i\theta} X_N y, y)\, : \,   \|y\|_2=1\}.
$$
By assumptions of the theorem,  for every $\theta \in \R$
 with probability one
$$
   \lim _{N\to \infty } \lam(\theta, N) = R.
$$

Let $\Sd$ denote the boundary of the disc $D(0, R)$.
Choose a finite  $\eps$-net $\mathcal{N}$ in $[0, 2\pi]$, so that $\{R e^{i\theta}\}_{\theta\in {\mathcal{N}}}$
is an $\eps$-net (in the geodesic metric) in  $\Sd$.
Then, with probability one, for every $\theta \in {\mathcal{N}}$ one has
$\lam(\theta, N)\to R$.

 Since $\mbox{Im } X_N = \mbox{Re } (e^{-i \pi/2}X_N)$, one has
$$
 R \leq  \limsup _{N\to \infty} \|X_N\| \leq \limsup _{N\to \infty} \|\mbox{Re } X_N\|
  + \limsup _{N\to \infty} \|\mbox{Im } X_N\| = 2R .
$$
 Choose $A\geq  \max\{R, 1\}$ and  $N\geq 1$ such that  for every $M\geq N$ one has
$$
  \|X_M\| \leq  A \quad \mbox{ and } \quad \forall  \theta \in {\mathcal{N}} \, \, \, \,
   |\lam(\theta, M) - R| \leq \eps .
$$
Fix $M\geq N$.
Note that the supremum in the definition of $\lam(\theta, M)$ is attained
and that
$$
  |\mbox{Re } (e^{i\theta }X_M y, y) - \mbox{Re } (e^{i t }X_M y, y)| \leq
  |e^{i\theta } - e^{i t }|\cdot  |(X_M y, y)|\leq  \eps  A,
$$
whenever $|\theta - t|\leq \eps$ and $\|y\|_2=1$.
Using approximation by elements of $\mathcal{N}$, we obtain for every real $t$,
$$
        |\lam(t, M) - R |\leq (A + 1) \eps .
$$

Let $y_0$ be such that $\|y_0\|=1$ and
$$
  \lam := \sup\{|(X_M y, y)| \, : \,  ||y||_2=1\} =  |(X_M y_0, y_0)|.
$$
Then for some $t$
$$
  \lam = e^{i t} (X_M y_0, y_0) = \mbox{Re } (e^{i t}  X_M y_0, y_0)=
  \lam (t, M) \leq R + (A + 1) \eps  .
$$
This shows that $W(X_M) \subset D(0,  R + (A + 1) \eps)$.

Finally fix some $z\in \Sd$, that is $z=  R e^{i t}$. Choose $\theta \in {\mathcal{N}}$
such that $|t-\theta|\leq \eps$. Let $y_1$ be such that
$$
  \lam (-\theta, M) = \mbox{Re } (e^{-i\theta }X_M y_1, y_1) =
  \mbox{Re } (e^{-i\theta } (X_M y_1, y_1)).
$$
Denote $x:= (X_N y_1, y_1)$. Then
$$
  R - (A + 1) \eps \leq \mbox{Re } (e^{-i\theta } x) \leq |x| \leq  R + (A + 1) \eps .
$$
 Since $A\geq \max\{R, 1\}$ and $\eps\leq R/(A+1)$, this implies that
$$
  |R e^{i\theta }-x| \leq \sqrt{(A + 1)^2 \eps^2 + 4 R (A + 1) \eps } \leq
  2 A \sqrt{\eps} \, \sqrt{\eps+2}.
$$
 Since $|t-\theta|\leq \eps$ and $\eps <1/2$, we observe that
$$
  |z-x|\leq  R |e^{i t } -e^{i\theta }| + |R e^{i\theta }-x|
  \leq  R \eps + 2\sqrt{2.5} A \sqrt{\eps} \leq 4 A \sqrt{\eps}.
$$
Therefore, for every $z\in \Sd$ there exists  $x\in W(X_M)$ with
$$
  |z-x| \leq 4 A \sqrt{\eps}.
$$

Using convexity of $W(X_M)$, we obtain that with probability one
$$
  d_H (W(X_M), D(0, R)) \leq  4 A \sqrt{\eps}.
$$
Since $M\geq N$ was arbitrary, this implies the desired result.

The proof of the second part of the theorem is essentially the same.
Note that the $\eps$-net in our  proof can be chosen to have the cardinality
not exceeding $2.2\pi R/\eps$.
Thus, by the union bound,
the probability of the event
$$
  \|X_M\| \leq  A \quad \mbox{ and } \quad \forall  \theta \in {\mathcal{N}} \, \, \, \,
   |\lam(\theta, M) - R | \leq \eps ,
$$
considered  above,
does not exceed
$p_N + 2.2 \pi R \ \eps^{-1} q_N (\eps).$
This implies the quantitative part of the theorem.
\end{proof}

\bigskip

The next theorem shows that the first part of Theorem~\ref{mega} applies to a large class
of random matrices (essentially to matrices whose entries are i.i.d. random variables
having final fourth moments and corresponding triangular matrices), in particular to ensembles
$G_N$, $G^{\R}_N$ and $T_N$ introduced in Section~\ref{illustr}.



\begin{theorem}\label{realgin}
Let $x_{i,i}$, $i\geq 1$, be i.i.d. complex random variables with finite second moment,
$x_{i, j}$, $i\ne j$, be i.i.d. centered complex random variables with finite fourth moment,
and all these variables are independent. Assume $\E |x_{1,2}|^2 = \lambda ^2$
for some $\lambda >0$. Let $ X_N = N^{-1/2}\, \{x_{i, j}\}_{i, j \leq N}, $
and $Y_N$ be the matrix whose entries on or above the diagonal are the same as
entries of $X_N$ and entries below diagonal are zeros.
Then with probability one,
$$
   d_H ( W(X_N),  D(0,\sqrt{2}\lambda )) \to 0 \quad \mbox{ and } \quad
   d_H ( W(Y_N),  D(0, \lambda ))\to 0 .
$$
In particular with probability one,
$$
   d_H ( W(G_N),  D(0,\sqrt{2})) \to 0 , \quad
   d_H ( W(G^{\R}_N),  D(0,\sqrt{2})) \to 0
$$
and
$$
   d_H ( W(T_N),  D(0,\sqrt{2})) \to  0.
$$
\end{theorem}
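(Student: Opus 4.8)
The plan is to reduce everything to Theorem~\ref{mega}: it suffices to show that for each fixed $\theta\in\R$ we have, with probability one, $\|\mbox{Re}\,(e^{i\theta}X_N)\|\to\sqrt2\,\lambda$ (and $\|\mbox{Re}\,(e^{i\theta}Y_N)\|\to\lambda$ for the triangular case), since the net argument over $\theta$ is already carried out inside the proof of Theorem~\ref{mega}. Fix $\theta$ and set $H_N:=\mbox{Re}\,(e^{i\theta}X_N)=\tfrac12(e^{i\theta}X_N+e^{-i\theta}X_N^*)$, a Hermitian matrix. Writing out entries, for $i<j$ one has $(H_N)_{ij}=\tfrac{1}{2\sqrt N}\left(e^{i\theta}x_{ij}+e^{-i\theta}\overline{x_{ji}}\right)$, with $(H_N)_{ji}=\overline{(H_N)_{ij}}$, while $(H_N)_{ii}=\tfrac1{\sqrt N}\mbox{Re}\,(e^{i\theta}x_{ii})$. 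I would split $H_N=W_N+D_N$ into its off-diagonal part $W_N$ and its diagonal part $D_N$ and treat the two separately.

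\textbf{The off-diagonal part is a Wigner matrix.} The entries of $W_N$ indexed by distinct unordered pairs $\{i,j\}$ involve disjoint sets of the independent variables $x_{\cdot\cdot}$, hence are i.i.d.; they are centered and inherit a finite fourth moment from the $x_{ij}$. The key computation is the variance: since $x_{ij}$ and $x_{ji}$ are independent and centered, the cross term vanishes and
$$
\E\bigl|e^{i\theta}x_{ij}+e^{-i\theta}\overline{x_{ji}}\bigr|^2
=\E|x_{ij}|^2+\E|x_{ji}|^2=2\lambda^2,
$$
so that $\E|(W_N)_{ij}|^2=\lambda^2/(2N)$ for $i\neq j$. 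Thus $W_N$ is a complex Hermitian Wigner matrix with off-diagonal variance $\sigma^2/N$ for $\sigma^2=\lambda^2/2$. By the Bai--Yin theorem (almost sure convergence of the extreme eigenvalues of a Wigner matrix to the edges of the semicircle law, valid under the finite fourth moment assumption), both $\lambda_{\max}(W_N)\to 2\sigma$ and $\lambda_{\min}(W_N)\to-2\sigma$ almost surely, whence $\|W_N\|\to 2\sigma=\sqrt2\,\lambda$ a.s. This is exactly where the fourth moment hypothesis is used, and it is the main analytic input of the argument.

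\textbf{The diagonal is negligible.} Here I would only use the finite second moment of the $x_{ii}$: since $\|D_N\|=\max_{i\le N}\tfrac1{\sqrt N}|\mbox{Re}\,(e^{i\theta}x_{ii})|\le \tfrac1{\sqrt N}\max_{i\le N}|x_{ii}|$, a Borel--Cantelli argument (from $\sum_n\PP(|x_{nn}|^2>\eps^2 n)\le \eps^{-2}\E|x_{11}|^2<\infty$) gives $\max_{i\le N}|x_{ii}|/\sqrt N\to 0$ a.s., hence $\|D_N\|\to 0$. By the triangle inequality $\|H_N\|\to\sqrt2\,\lambda$ a.s., and Theorem~\ref{mega} applied with $R=\sqrt2\,\lambda$ yields $d_H(W(X_N),D(0,\sqrt2\,\lambda))\to0$. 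For $Y_N$ the same decomposition applies, except that for $i<j$ one gets $(H_N)_{ij}=\tfrac1{2\sqrt N}e^{i\theta}x_{ij}$ (the lower-triangular $x_{ji}$ being zero), so the off-diagonal variance halves to $\lambda^2/(4N)$ and the edge becomes $\lambda$; Theorem~\ref{mega} with $R=\lambda$ finishes this case. The three special cases follow by substitution: $G_N$ and $G_N^{\R}$ correspond to $\lambda=1$, giving radius $\sqrt2$; and $T_N$ corresponds to $Y_N$ with $\lambda=\sqrt2$ (variance $2/(N-1)\sim 2/N$), the discrepancy between strictly-upper-triangular and upper-triangular as well as between $2/(N-1)$ and $\lambda^2/N$ being absorbed into the negligible diagonal/lower-order terms, giving radius $\sqrt2$.

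\textbf{Main obstacle.} The substance is the almost sure edge convergence of the Wigner matrix $W_N$; everything else is bookkeeping. Two points warrant care: that the finite fourth moment assumption is what guarantees the \emph{almost sure} (not merely in-probability) edge convergence required by Theorem~\ref{mega}, and that for a \emph{complex} Hermitian Wigner matrix the limiting edge is governed solely by $\E|(W_N)_{ij}|^2$ and not by the pseudo-variance $\E(W_N)_{ij}^2$, so that the computation above indeed pins the radius at $2\sigma$.
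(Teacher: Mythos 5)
Your proposal is correct and follows essentially the same route as the paper: for each fixed $\theta$ you identify $\mbox{Re}\,(e^{i\theta}X_N)$ as a Hermitian Wigner matrix with off-diagonal variance $\lambda^2/2$ (resp.\ $\lambda^2/4$ for $Y_N$) after $\sqrt{N}$-scaling, invoke the Bai--Yin almost sure edge convergence under the fourth moment assumption, and feed the resulting norm limit into Theorem~\ref{mega} with $R=\sqrt{2}\lambda$ (resp.\ $R=\lambda$), exactly as the paper does via Theorem~5.2 of \cite{BS}. The only cosmetic difference is that the paper's citation already covers i.i.d.\ diagonal entries with finite second moment, so your separate Borel--Cantelli argument showing the diagonal is negligible is subsumed in that reference rather than done by hand.
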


\begin{proof}
It is easy to check that the  entries of $\sqrt{N} \, Re (e^{i\theta} X_N)$  satisfy
conditions of Theorem~5.2 in \cite{BS}, that is the diagonal entries are i.i.d.
real random variables with finite second moment; the above diagonal entries
are i.i.d. mean zero complex variables with finite fourth moment and of variance
$\lambda ^2/2$. Therefore, Theorem~5.2 in \cite{BS} implies that
$\| Re (e^{i\theta} X_N)\| \to \sqrt{2} \lambda$. Theorem~\ref{mega} applied with
$R=\sqrt{2}\lambda$ provides the first limit. For the triangular matrix $Y_N$ the proof
is the same, we just need to note that the above diagonal entries of
$\sqrt{N} \, Re (e^{i\theta} Y_N)$ have variances $(\lambda/2)^2$.
The ``in particular" part follows immediately.
%
%
\end{proof}

\medskip

We now turn to quantitative estimates for ensembles $G_N$ and $T_N$.

\medskip

\begin{theorem}\label{ginib}
There exist absolute positive constants $c$ and $C$ such that for every $\eps\in (0, 1]$ and every $N$,
$$
  \PP\left( d_H\left(W(G_N), D(0,\sqrt{2})\right) \geq \eps \r)  \leq C \ \eps^{-2} \ \exp(-c N \eps ^{3}).
$$
\end{theorem}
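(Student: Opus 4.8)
The plan is to read the conclusion directly off the quantitative part of Theorem~\ref{mega}, applied with $R=\sqrt{2}$, so the whole task reduces to producing the two tail inputs $p_N$ and $q_N(\eps)$ required there and then doing the constant bookkeeping. First I would fix the operator-norm cutoff $A=2.1$; this satisfies $A\geq\max\{R,1\}=\sqrt{2}$, and inequality~(\ref{norm}) gives
$$
  \PP\left(\|G_N\|>A\right)\leq C_4\exp(-c_4 N),
$$
so I may take $p_N=C_4\exp(-c_4 N)$.

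The one genuine observation is how to obtain a bound $q_N(\eps)$ on $\PP(|\,\|\mbox{Re }(e^{i\theta}G_N)\|-\sqrt{2}\,|>\eps)$ that is \emph{uniform in $\theta$}. The entries of $G_N$ are centered complex Gaussian, hence circularly symmetric, so $e^{i\theta}G_N$ has the same distribution as $G_N$ for every real $\theta$. Since $X\mapsto\|\mbox{Re } X\|$ is a fixed function of the matrix, it follows that $\|\mbox{Re }(e^{i\theta}G_N)\|$ has the same distribution as $\|\mbox{Re } G_N\|=\|Y_N\|$, where $Y_N=\mbox{Re } G_N$ is the $(2N)^{-1/2}$ multiple of a GUE discussed in Section~\ref{GUE}. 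Therefore (\ref{maxev}) applies verbatim and gives, for every $\theta$ and every $\eps\in(0,1]$,
$$
  \PP\left(\,\big|\,\|\mbox{Re }(e^{i\theta}G_N)\|-\sqrt{2}\,\big|>\eps\right)
  =\PP\left(\,\big|\,\|Y_N\|-\sqrt{2}\,\big|>\eps\right)
  \leq C_3\exp(-c_2 N\eps^{3/2}),
$$
so I may take $q_N(\eps)=C_3\exp(-c_2 N\eps^{3/2})$, independent of $\theta$.

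With these inputs I would invoke the quantitative conclusion of Theorem~\ref{mega}: for $\delta\leq\min\{1/2,\sqrt{R/(A+1)}\}$,
$$
  \PP\left(d_H(W(G_N),D(0,\sqrt{2}))>4A\delta\right)\leq p_N+7R\,\delta^{-2}q_N(\delta^2).
$$
To turn the threshold $4A\delta$ into the stated $\eps$, I set $\delta=\eps/(4A)=\eps/8.4$; since $\eps\leq 1$ this $\delta$ satisfies the admissibility constraint (one checks $\sqrt{R/(A+1)}=\sqrt{\sqrt{2}/3.1}>1/2$, so the binding bound is $\delta\leq 1/2$, which holds). The crucial point is that $q_N(\delta^2)=C_3\exp(-c_2 N(\delta^2)^{3/2})=C_3\exp(-c_2 N\delta^3)$: the GUE exponent $3/2$ composed with the squaring $\delta\mapsto\delta^2$ (itself coming from the $\sqrt{\eps}$ loss in the net argument of Theorem~\ref{mega}) is exactly what produces the cubic exponent $\eps^3$ in the final bound. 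Substituting $\delta=\eps/8.4$ yields
$$
  \PP\left(d_H(W(G_N),D(0,\sqrt{2}))>\eps\right)
  \leq C_4\exp(-c_4 N)+7\sqrt{2}\,(8.4)^2\,C_3\,\eps^{-2}\exp\!\left(-c_2 N\eps^3/(8.4)^3\right).
$$

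It remains to absorb everything into a single constant of the desired shape. Using $\eps\leq 1$, so that $\eps^3\leq 1$ and $\eps^{-2}\geq 1$, I would bound the first term by $C_4\eps^{-2}\exp(-c_4 N\eps^3)$, set $c=\min\{c_4,\,c_2/(8.4)^3\}$ and $C=C_4+7\sqrt{2}\,(8.4)^2 C_3$, and conclude
$$
  \PP\left(d_H(W(G_N),D(0,\sqrt{2}))\geq\eps\right)\leq C\,\eps^{-2}\exp(-c N\eps^3),
$$
as claimed. There is no serious obstacle here beyond the two items just isolated: the rotational-invariance reduction that makes $q_N$ independent of $\theta$ (without it, one could not feed a single GUE estimate into Theorem~\ref{mega}), and the careful rescaling $\eps\mapsto\eps/(4A)$ together with the resulting tracking of constants and of the $\eps^3$ exponent. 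The remaining manipulations are routine.
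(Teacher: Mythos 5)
Your proposal is correct and follows essentially the same route as the paper: the paper's own (three-line) proof of Theorem~\ref{ginib} consists precisely of the rotational-invariance observation that $e^{i\theta}G_N$ is distributed as $G_N$, the identification of $\mbox{Re}\, G_N$ as a $(2N)^{-1/2}$ multiple of a GUE, and an appeal to the quantitative part of Theorem~\ref{mega} with $R=\sqrt{2}$ via (\ref{maxev}) and (\ref{norm}), ``adjusting absolute constants.'' Your write-up simply makes that constant bookkeeping explicit (the choice $A=2.1$, the rescaling $\delta=\eps/(4A)$, and the composition of the GUE exponent $\eps^{3/2}$ with the squaring $\delta\mapsto\delta^{2}$ that yields the cubic exponent), all of which checks out.
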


\medskip
\noindent
{\bf Remark 1.} Note that by Borel-Cantelli lemma,
this theorem also implies that $d_H ( W(G_N),  D(0,\sqrt{2})) \to 0$.

\medskip

\begin{proof}
Note that for every real $\theta$ the  distributions of $G_N$
and $e^{i\theta } G_N$ coincide.
Note also that $\mbox{Re } (G_N)$ is a $1/\sqrt{2N}$ multiple of a GUE.
Thus, the desired result follows from the quantitative
part of Theorem~\ref{mega}  by (\ref{maxev}) and (\ref{norm})
(and by adjusting absolute constants).
\end{proof}

\medskip
\noindent
{\bf Remark 2.}
It is possible to establish a direct link between Theorem~\ref{ginib},
geometry of the set of mixed quantum states
and the Dvoretzky theorem \cite{Dv61, MiS}.

As before, let ${ Q}_N=\{\varrho: \varrho=\varrho^*, \ \varrho\ge 0, \ {\rm Tr}\varrho=1 \}$
be the set of complex density matrices of size $N$.
It is well known \cite{BZ06} that working in the geometry induced by the
Hilbert-Schmidt distance this set of (real) dimension $N^2 - 1$ is
inscribed inside a sphere of radius $\sqrt{(N-1)/N}\approx 1$,
and it contains a ball of radius  $1/\sqrt{(N-1)N}\approx 1/N$.
Applying the Dvoretzky theorem and the techniques of \cite{AS06}, one can prove
the following result \cite{AS12}: for large $N$
a generic two-dimensional projection of the set  ${Q}_N$
is very close to the Euclidean disk of radius $r_N=2/\sqrt{N}$.
Loosely speaking, in high dimensions a typical projection of a convex body
becomes close to a circular disk -- see e.g. \cite{AL}.

To demonstrate a relation with the numerical range of random matrices we apply
results from  \cite{DGH+11}, where it was shown that for any matrix $X$ of order $N$
 its numerical range $W(X)$ is  up to a translation and dilation equal
 to an orthogonal projection of the set ${Q}_N$.
The matrix $X$ determines the projection plane, while
the scaling factor for a traceless matrix reads
$\alpha (X) = \sqrt{\frac{1}{2}({\rm Tr} XX^* + | {\rm Tr} X^2|)}$.

Complex Ginibre matrices are asymptotically traceless,
and the second term $|{\rm Tr} G^2|$ tends to zero,
so the normalization condition used in this work, $\E {\rm Tr} G_NG_N^*=N$,
implies that $\E \alpha(G_N)$ converges asymptotically to $\sqrt{N/2}$.
It is natural to expect that
the projection of ${Q}_N$  associated with the complex Ginibre
matrix $G_N$ is generic and is characterized by the Dvoretzky theorem.

Our result shows that the random projection  of ${Q}_N$,
associated with the complex Ginibre matrix $G_N$
does indeed have the features expected
in view of Dvoretzky's theorem
and is close to a disk of radius $r_N  \E \alpha(G_N) = \sqrt{2}$.

\medskip


\begin{theorem}\label{triang}
There exist absolute positive constants $c$ and $C$ such that for every $\eps\in (0, 1]$ and every $N$,
$$
  \PP\left( d_H\left(W(T_N), D(0,\sqrt{2})\right) \geq \eps \r)  \leq C \ \eps^{-2} \ \exp(-c N \eps ^{3}).
$$
\end{theorem}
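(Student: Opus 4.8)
The plan is to run the proof in close parallel to that of Theorem~\ref{ginib}, with the quantitative estimate (\ref{devtrig}) playing the role that (\ref{maxev}) played there, and with the operator-norm tail supplied by hand since no analogue of (\ref{norm}) is quoted for $T_N$. The structural fact that makes this work is that $T_N$ shares the rotational invariance of $G_N$: the nonzero entries $(T_N)_{ij}$, $i<j$, are circularly symmetric complex Gaussians, so multiplication by $e^{i\theta}$ leaves their joint law unchanged (and keeps the zeros zero), whence $e^{i\theta}T_N$ and $T_N$ are equal in distribution for every real $\theta$. Consequently $\|\mbox{Re}\,(e^{i\theta}T_N)\|$ has the same distribution as $\|\mbox{Re}\,T_N\|$ for every $\theta$, so a single one-parameter estimate feeds the hypotheses of Theorem~\ref{mega} uniformly in $\theta$, while $\|e^{i\theta}T_N\|=\|T_N\|$ deterministically.

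First I would identify $R=\sqrt2$ and produce the function $q_N$. Using the representation $\mbox{Re}\,T_N = Z_N/\sqrt{2(N-1)}$ recalled in Section~\ref{uptrima}, one has $\|\mbox{Re}\,T_N\| = \sqrt{N/(N-1)}\,(\|Z_N\|/\sqrt N)/\sqrt2$, which concentrates at $2/\sqrt2=\sqrt2$. Translating (\ref{devtrig}) through this identity and absorbing the $\sqrt{N/(N-1)}=1+O(1/N)$ correction into the constants — exactly the maneuver used to derive (\ref{devtrig}) itself, legitimate because for $\eps$ below order $1/N$ the asserted bound already exceeds $1$ — gives, for every $\eps\in(0,1]$ and every $\theta$,
$$
  \PP\left(\left|\,\|\mbox{Re}\,(e^{i\theta}T_N)\| - \sqrt2\,\right| > \eps\right) \leq C\exp(-cN\eps^{3/2}) =: q_N(\eps).
$$

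The one step genuinely different from Theorem~\ref{ginib} is the operator-norm tail. I would obtain it from the triangle inequality $\|T_N\|\leq \|\mbox{Re}\,T_N\|+\|\mbox{Im}\,T_N\|$: since $\mbox{Im}\,T_N=\mbox{Re}\,(e^{-i\pi/2}T_N)$ also carries the law of $\mbox{Re}\,T_N$, both summands concentrate at $\sqrt2$, so fixing the constant $A=3>2\sqrt2$ yields
$$
  \PP(\|T_N\|>3)\leq 2\,\PP(\|\mbox{Re}\,T_N\|>3/2)\leq 2\,\PP\left(\left|\,\|\mbox{Re}\,T_N\|-\sqrt2\,\right|>3/2-\sqrt2\right)\leq C\exp(-cN)=:p_N,
$$
using $3/2>\sqrt2$ and the estimate just derived; note $A=3\geq\max\{\sqrt2,1\}$ as required.

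Finally I would feed $R=\sqrt2$, $A=3$, and the above $p_N$, $q_N$ into the quantitative part of Theorem~\ref{mega}. Setting its internal parameter to $\eps/12$ (so $4A\cdot(\eps/12)=\eps$, admissible since $\eps/12\leq 1/12\leq\min\{1/2,\sqrt{R/(A+1)}\}$ for $\eps\in(0,1]$) gives a bound of the shape $p_N + C\eps^{-2}q_N((\eps/12)^2)=C\exp(-cN)+C\eps^{-2}\exp(-cN\eps^3)$. Because $\eps\leq1$ forces $\exp(-cN)\leq\exp(-cN\eps^3)$ and $\eps^{-2}\geq1$, the first term is absorbed into the second, producing the claimed $C\eps^{-2}\exp(-cN\eps^3)$ after renaming constants. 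I expect no real obstacle beyond this bookkeeping; the only point requiring a moment's thought is supplying the operator-norm tail $p_N$ without an off-the-shelf bound, which the $\mbox{Re}/\mbox{Im}$ decomposition resolves cleanly.
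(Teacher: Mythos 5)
Your proposal is correct and follows the paper's own proof essentially step for step: rotational invariance of $T_N$, the representation $\mbox{Re}\,T_N = Z_N/\sqrt{2(N-1)}$ combined with (\ref{devtrig}) to get $q_N(\eps)=C\exp(-cN\eps^{3/2})$, the tail bound $\PP(\|T_N\|\geq 3)\leq C\exp(-cN)$ via the decomposition $T_N=\mbox{Re}\,T_N + i\,\mbox{Re}\,(e^{-i\pi/2}T_N)$, and then the quantitative part of Theorem~\ref{mega} with $R=\sqrt{2}$, $A=3$. You merely make explicit the bookkeeping (the union bound behind $p_N$, the internal parameter $\eps/12$, absorbing $p_N$ into the second term) that the paper leaves implicit.
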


\medskip
\noindent
{\bf Remark 3. } Note that by Borel-Cantelli lemma,
this theorem also implies that $d_H ( W(T_N),  D(0,\sqrt{2})) \to 0$.

\medskip

\begin{proof}
Note that for every real $\theta$ the  distributions of $T_N$
and $e^{i\theta } T_N$ coincide. As was mentioned above
$\mbox{Re } T_N$ can be presented as $Z_N/\sqrt{2(N-1)}$, where
 $Z_N$ is a complex Hermitian $N\times N$ matrix with zero on the diagonal and
independent complex Gaussian random variables of variance one above the diagonal.
Thus, by  (\ref{devtrig}), for every $\theta \in \R$ and $\eps \in (0, 1]$
$$
  \PP \left( \left|  \|  \mbox{Re } (e^{i\theta } T_N)\| - \sqrt{2} \right| > \eps \r) \leq
  C \exp(-c N \eps ^{3/2})
$$
(one needs to adjust the absolute constants).
Since $X_N= \mbox{Re } X_N + i \mbox{Im } X_N = \mbox{Re } X_N + i \mbox{Re }(e^{-i\pi/2} X_N )$,
$$
  \PP \left( \| T_N\|  \geq  3  \r) \leq  C_2 \exp(-c_1 N) .
$$
Thus, applying Theorem~\ref{mega} (with $R=\sqrt{2}$ and $A=3$),
we obtain the desired result.
\end{proof}

\smallskip

\section{Further extensions.}
\label{concrem}

Note that the first part of the proof of Theorem~\ref{mega},  the inclusion of $W(X_N)$
into the disk, can be extended to a more general setting, when $R$ is not a constant but
a function of $\theta$. Namely, let $R : \R \to [1, \infty)$ be a $(2\pi)$-periodic
continuous  function. Let $K(R)$ be defined by (\ref{radial}), i.e.
$$
  K(R) := \{ \lambda e^{- i\theta} \, R(\theta) \, \, : \, \, \lambda \in [0, 1],\,
  \theta \in [0, 2\pi)\}.
$$
Note that if we identify $\C$ with $\R^2$ and $\theta$ with the direction $e^{-i\theta}$
then $R$ becomes the radial function of the star-shaped body $K(R)$. Then we have the
following

\begin{theorem} \label{inclusion} Let $K(R)$ be a star-shaped body with a continuous radial
function $R(\theta)$, $\theta\in [0, 2\pi)$.
Let $\{X_N\}_N$ be a sequence of complex random
$N\times N$  matrices such that for every $\theta\in [0, 2\pi)$  with probability one
$$
     \lim _{N\to \infty} \|\mbox{\rm Re }  (e^{i\theta} X_N)\|   = R(\theta).
$$
Then with probability one
$$
  \lim _{N\to \infty} d_H ( W(X_N)\setminus K(R), \emptyset) =0
$$
(in other words asymptotically the numerical range is contained in $K(R)$).
Furthermore, if there exists $A> 0$ such that for every $N\geq 1$,
$$
  \PP\left( \|X_N\| > A \right) \leq p_N
$$
and for every $\eps\in (0,1/2)$, $N\geq 1$, $\theta\in \R$,
$$
   \PP\left( \left|\, \|\mbox{\rm Re }  (e^{i\theta} X_N)\| - R(\theta) \right| > \eps \right) \leq q_N (\eps)
$$
then  for every
$\eps \in (0, 1/2)$ and every $N$ one has
$$
  \PP\left( d_H(W(X_N)\subset K(R + (2A + 1) \eps)  \r)  \leq
   p_N + 2 L \eps^{-1} \   q_N (\eps) ,
$$
where $L$ denotes the length of the curve $\{R(\theta)\}_{\theta\in [0, 2\pi)}$.
\end{theorem}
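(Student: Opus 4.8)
The plan is to adapt only the first (inclusion) half of the proof of Theorem~\ref{mega}, since the region $K(R)$ is merely star-shaped and not convex, so the extreme-point argument that produced the reverse inclusion in Theorem~\ref{mega} has no analogue here; this is why the statement only asserts asymptotic containment $W(X_N)\subset K(R)$ rather than a genuine Hausdorff limit. As before I set $\lambda(\theta,N):=\|\mbox{Re }(e^{i\theta}X_N)\|=\sup\{\mbox{Re }(e^{i\theta}X_N y,y):\|y\|_2=1\}$, so the hypothesis reads $\lambda(\theta,N)\to R(\theta)$ almost surely for each fixed $\theta$. The geometric core is the same polar-coordinate observation: for any $w=(X_N y,y)\in W(X_N)$ with $w\neq 0$, writing $w=|w|e^{-i\theta_w}$ gives $e^{i\theta_w}w=|w|\in\R$, hence
$$
  |w|=\mbox{Re }\!\big(e^{i\theta_w}(X_N y,y)\big)\le \lambda(\theta_w,N).
$$
Therefore, bounding $\lambda(\theta_w,N)$ by $R(\theta_w)$ up to a small error forces $|w|\le R(\theta_w)+\text{error}$, which is exactly the membership $w\in K(R+\text{error})$ by the very definition of $K$ (the case $w=0$ being trivial since $0\in K(R)$).

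The genuinely new ingredient, and the step I expect to be the main obstacle, is the construction of the net: unlike in Theorem~\ref{mega}, the angle $\theta_w$ runs and $R$ is a true function, so a single $\eps$-net must simultaneously control the oscillation of $R$ and the Lipschitz error of $\lambda(\cdot,N)$. The right device is to lay the net along the boundary curve $\gamma(\theta)=R(\theta)e^{-i\theta}$ \emph{by arc length}: partition $\gamma$ into $\lceil L/\eps\rceil\le 2L\eps^{-1}$ sub-arcs of length at most $\eps$ (note $L\ge\int_0^{2\pi}R\ge 2\pi>\eps$, so the ceiling is harmless), and take $\mathcal{N}$ to be the endpoint angles. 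Here is where the normalization $R(\theta)\ge 1$ is essential: the arc-length element $\sqrt{R'(\theta)^2+R(\theta)^2}\,d\theta$ dominates both $d\theta$ (because $R\ge 1$) and $|R'(\theta)|\,d\theta$, so consecutive net angles $\theta_0,\theta_w$ satisfy \emph{both} $|\theta_w-\theta_0|\le\eps$ and $|R(\theta_w)-R(\theta_0)|\le\eps$. Getting this double control from a net of cardinality only $O(L/\eps)$ is the crux of matching the stated bound.

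With $\mathcal{N}$ fixed I would run a union bound. On the event $\{\|X_N\|\le A\}\cap\{\,\forall\,\theta\in\mathcal{N}:\ |\lambda(\theta,N)-R(\theta)|\le\eps\,\}$, whose complement has probability at most $p_N+|\mathcal{N}|\,q_N(\eps)\le p_N+2L\eps^{-1}q_N(\eps)$, the Lipschitz estimate $|\lambda(\theta,N)-\lambda(t,N)|\le A\,|e^{i\theta}-e^{it}|\le A|\theta-t|$ (valid since $|(X_N y,y)|\le\|X_N\|\le A$) combines with the two net estimates, applied at the nearest $\theta_0\in\mathcal{N}$, to give
$$
  |w|\le\lambda(\theta_w,N)\le\lambda(\theta_0,N)+A\eps\le R(\theta_0)+(A+1)\eps\le R(\theta_w)+(A+2)\eps .
$$
Since in all applications (as in Theorem~\ref{mega}) one has $A\ge 1$, so that $(A+2)\eps\le(2A+1)\eps$, this yields $W(X_N)\subset K(R+(2A+1)\eps)$ on the good event, which is precisely the asserted quantitative bound.

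For the almost-sure statement I would first record, exactly as in Theorem~\ref{mega}, that $\|X_N\|\le\|\mbox{Re }X_N\|+\|\mbox{Im }X_N\|$ with the two norms converging to $R(0)$ and $R(-\pi/2)$, so almost surely $\|X_N\|\le A$ eventually for any fixed $A>R(0)+R(-\pi/2)$. Then I fix the countable union $\bigcup_k\mathcal{N}_{1/k}$ of the arc-length nets for $\eps=1/k$, intersect the countably many probability-one events $\{\lambda(\theta,N)\to R(\theta)\}$ over $\theta\in\bigcup_k\mathcal{N}_{1/k}$, and apply the inclusion above on this single probability-one event to obtain $W(X_N)\subset K(R+(2A+1)/k)$ eventually, for every $k$. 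Since a point of modulus at most $R(\theta_w)+\delta$ in the direction $e^{-i\theta_w}$ lies within distance $\delta$ of $K(R)$, this forces $\sup_{w\in W(X_N)}\mbox{dist}(w,K(R))\to 0$, i.e. $d_H(W(X_N)\setminus K(R),\emptyset)\to 0$, as claimed.
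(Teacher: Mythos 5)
Your proposal is correct and follows essentially the same route as the paper's proof: a finite net of angles realizing an $\eps$-net on the boundary curve $\{R(\theta)e^{i\theta}\}$ (of cardinality at most $2L\eps^{-1}$), the Lipschitz bound $|\lam(\theta,N)-\lam(t,N)|\leq A|e^{i\theta}-e^{it}|$ on the event $\|X_N\|\leq A$, and the polar-coordinate observation $|w|\leq \lam(\theta_w,N)$ to conclude $W(X_N)\subset K(R+C\eps)$, with a union bound for the quantitative part. The only (harmless) cosmetic difference is that you control the oscillation $|R(\theta_w)-R(\theta_0)|\leq\eps$ directly from arc length, where the paper instead uses the Lipschitz bound $|R(\theta)-R(t)|\leq A|e^{i\theta}-e^{it}|$ of Remark~5, so you get the slightly better constant $(A+2)\eps\leq(2A+1)\eps$.
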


\medskip

\noindent
{\bf Remark 4.}
The proof below can be adjusted to prove the inclusion (\ref{inclus})
(in fact (\ref{inclus}) is simpler, since it does not require the approximation).

\smallskip

\noindent
{\bf Remark 5.}
 Under assumptions of Proposition~\ref{inclusion} on the convergence of
norms to $R$, the function $R$ must be continuous. Indeed, for
every $\theta$ and $t$ one has with probability one
$$
  |R(\theta) -R(t) | \leq \lim _{N\to\infty }
   \left| \, \|\mbox{Re }  (e^{i\theta} X_N)\|  - \|\mbox{Re }  (e^{i t} X_N)\| \right|
$$
$$
  \leq  \left| e^{i\theta} - e^{i t} \right|  \limsup _{N\to\infty }
  \| X_N\|
$$
and
$$
 \limsup _{N\to \infty} \|X_N\| \leq \limsup _{N\to \infty} \|\mbox{Re } X_N\|
  + \limsup _{N\to \infty} \|\mbox{Im } X_N\| = R(0) + R(-\pi/2) .
$$

\smallskip

\noindent
{\bf Remark 6.}
Continuity and periodicity are not the only constraints that $R$
should satisfy. For Theorem \ref{inclusion} not to be an empty statement,
The set $K(R)$ should also have the property of being convex.
This is clearly a necessary condition, and it can be proved
by simple diagonal examples that it is also a sufficient condition.

\bigskip

\noindent
{\bf Proof. }
Fix $\eps \in (0,1/2)$.
Denote
$$
  \lam(\theta, N) := \sup\{ \mbox{Re } (e^{i\theta} X_N y, y), \|y\|_2=1\}.
$$
Note that
$$
  \lam(\theta, N) = \|\mbox{Re }  (e^{i\theta} X_N)\|.
$$
Thus for every $\theta \in \R$ with probability one
$$
   \lim _{N\to \infty } \lam(\theta, N) = R(\theta) .
$$

Let $\partial K =\{R(\theta) \, \mid \, \theta\in [0, 2\pi) \}$ denote the boundary of  $K(R)$.
Choose a finite set   $\mathcal{N}$ in $[0, 2\pi]$ so that $\{R(\theta) e^{i\theta} \}_{\theta\in \mathcal{N}}$
is an $\eps$-net  in $\partial K$
(in the Euclidean metric).
Then, with probability one, for every $\theta \in {\mathcal{N}}$ one has
$\lam(\theta, N)\to R(\theta)$.

As before, note
$$
 \max_{\theta} R(\theta) \leq  \limsup _{N\to \infty} \|X_N\| \leq   R(0) + R(-\pi/2) .
$$
Choose $A\geq 1$ and  $N\geq 1$ such that  for every $M\geq N$ one has
$$
  \|X_M\| \leq  A \quad \mbox{ and } \quad \forall  \theta \in {\mathcal{N}} \, \, \, \,
   |\lam(\theta, M) - R(\theta)| \leq \eps .
$$
Note that the supremum in the definition of $\lam(\theta, N)$ is attained
and that
$$
  | e^{i\theta } - e^{i t }| \, \|X_N\| \leq
  | e^{i\theta } - e^{i t }| \,   A,
$$
whenever $\|y\|_2=1$. As was mentioned in the remark following the theorem,
$$
   |R(\theta) - R(t) | \leq  \left| e^{i\theta} - e^{i t} \right| A .
$$
Therefore, using approximation by elements of $\mathcal{N}$ and the simple estimate
$\left| e^{i\theta} - e^{i t} \right|\leq  \eps$, whenever $|\theta - t|\leq \eps$,
we obtain that for every real $t$ one has
\begin{align}\label{approx} \nonumber
  |\lam(t, N) - R(t) | \\ \nonumber
  &\leq |\lam(t, N) - \lam(\theta, N)| + |\lam(\theta, N) - R(\theta) |
 + |R(\theta) - R(t) | \\  &\leq  (2 A + 1) \eps .
\end{align}

Now let $y_0$ of norm one be such that  $(X_N y_0, y_0)$ is in the direction $e^{it}$, that is
$(X_N y_0, y_0)  = e^{it} R$ for some real positive $R$. Then
$$
  R = e^{-i t} (X_N y_0, y_0) = \mbox{Re } (e^{-i t}  X_N y_0, y_0) \leq  \lam (-t, N) \leq R(-t) +
  (2 A + 1) \eps  .
$$
This shows that $W(X_N) \subset K(R + (2 A + 1) \eps)$.

The quantitative estimates are obtained in the same way as in the proof of Theorem~\ref{mega}.
\qed

\medskip

As an example consider the following matrix. Let $H_1$, $H_2$ be independent distributed as $G_N$,
$a, b >0$ and $A:= a H_1 + i b H_2$. Then it is easy to see that $\mbox{Re } (e^{i\theta} A)$ is
distributed as $r(\theta) G_N$, where $r(\theta)=\sqrt{a^2 \cos^2 \theta + b^2 \sin ^2 \theta}$.
Therefore $\| \mbox{Re } (e^{i\theta} A)\| \to R(\theta):= \sqrt{2} r(\theta)$. Theorem~\ref{inclusion}
implies that $W(A)$ is asymptotically contained in $K(R)$ which is an ellipse.

\section{Norm estimate for the upper triangular matrix}
\label{secttri}

In this section we prove that $\|T_N\| \to \sqrt{2 e}$, as claimed in
Eq. \eqref{trnorm}
of the introduction  (Theorem~\ref{trinor}).
For the purpose of this section it is convenient to renormalize the
matrix $T_N$ and to consider $\bar T_N$, which is strictly upper diagonal
and whose entries above the diagonal
are complex centered i.i.d. Gaussians of variance $1/\sqrt{N}$.
Thus, $(\bar T_N)_{ij} = \sqrt{(N-1)/(2N)}  T_{ij}$.

We also consider upper triangular matrices $T_N'$, whose entries
above and on the diagonal are complex centered i.i.d. Gaussians of variance
$1/\sqrt{N}$. Note that $\bar T_N$ and $T_N'$ differs on the diagonal only,
therefore the following lemma follows from (\ref{maxgauss}).

\begin{lemma}
The operator norm of $\bar T_N$ converges with probability one to a limit $L$
iff the operator norm of $T_N'$ converges with probability one to $L$.
\end{lemma}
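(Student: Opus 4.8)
The plan is to reduce everything to the observation that $\bar T_N$ and $T_N'$ differ only by a diagonal matrix whose operator norm tends to zero almost surely. Set $V_N := T_N' - \bar T_N$; by construction $V_N$ is the diagonal matrix carrying the diagonal entries of $T_N'$, which are independent complex centered Gaussians of variance $1/\sqrt N$. Because $V_N$ is diagonal, its operator norm is simply $\max_{i \le N} |(T_N')_{ii}|$, and the whole argument rests on controlling this maximum.

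First I would record the triangle inequality $\big|\,\|T_N'\| - \|\bar T_N\|\,\big| \le \|V_N\|$, which shows that the two sequences of norms differ by $\|V_N\|$. Hence, once we know $\|V_N\| \to 0$ almost surely, the equivalence is immediate: $\|\bar T_N\| \to L$ with probability one if and only if $\|T_N'\| \to L$ with probability one.

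The substantive step is therefore to show $\|V_N\| \to 0$ almost surely. Writing each diagonal entry as $N^{-1/4}(g_i + i h_i)/\sqrt 2$ with $g_i, h_i$ i.i.d. standard real Gaussians, one has $|(T_N')_{ii}| \le N^{-1/4}\max(|g_i|,|h_i|)$, so that $\|V_N\| \le N^{-1/4}\max_{j\le 2N}|g_j|$, the maximum now running over the $2N$ standard Gaussians $\{g_i,h_i\}_{i\le N}$. Applying (\ref{maxgauss}) (with $2N$ in place of $N$) and choosing the deviation parameter $r$ to grow like $\sqrt{\ln N}$ produces a tail bound that is summable in $N$; the Borel--Cantelli lemma then gives, with probability one, $\|V_N\| \le N^{-1/4}\big(\sqrt{2\ln(4N)} + O(\sqrt{\ln N})\big) \to 0$.

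There is no real obstacle here; the only point worth flagging is the role of the variance scaling $1/\sqrt N$. Each diagonal entry has typical size $N^{-1/4}$, and taking a maximum over $N$ of them inflates this only logarithmically, so the product $N^{-1/4}\sqrt{\ln N}$ still vanishes. Thus the diagonal perturbation is asymptotically negligible in operator norm, which is exactly the content asserted just before the lemma.
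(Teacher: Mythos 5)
Your proof is correct and is exactly the argument the paper intends: the paper disposes of this lemma in one line, noting that $\bar T_N$ and $T_N'$ differ only on the diagonal so that the claim follows from (\ref{maxgauss}), and your triangle-inequality reduction $\bigl|\,\|T_N'\|-\|\bar T_N\|\,\bigr|\le\|V_N\|$ together with the Borel--Cantelli control of $\max_{i\le N}|(T_N')_{ii}|$ is precisely the omitted detail. (Whether one reads the paper's ``variance $1/\sqrt N$'' literally or, as the normalization $(\bar T_N)_{ij}=\sqrt{(N-1)/(2N)}\,T_{ij}$ suggests, as standard deviation $1/\sqrt N$, your bound --- $N^{-1/4}\sqrt{\ln N}\to 0$, respectively $N^{-1/2}\sqrt{\ln N}\to 0$ --- goes through unchanged.)
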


We reformulate the limiting behavior of $\| T_N\|$
in terms of $\bar T_N$. We prove the following theorem, which is clearly equivalent to (\ref{trnorm}).

\begin{theorem}\label{trinor}
With probability one, the operator norm of the sequence of random matrices
$\bar T_N$ tends to $\sqrt{e}$.
\end{theorem}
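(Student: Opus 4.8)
The plan is to sandwich the almost sure value of $\|\bar T_N\|$ between two bounds, both equal to $\sqrt e$: the lower bound will come from convergence of $*$-moments, and the upper bound from strong convergence \cite{HTh} applied after a block reduction. Throughout I would lean on the work of Dykema and Haagerup, who show that the strictly upper triangular Gaussian matrices $\bar T_N$ converge (almost surely, in $*$-distribution) to the quasinilpotent DT-operator $\mathcal{T}$, whose squared singular-value moments are $\tau((\mathcal{T}^*\mathcal{T})^n)=n^n/(n+1)!$. By Stirling's formula $\|\mathcal{T}\|^2=\lim_n (n^n/(n+1)!)^{1/n}=e$, so $\|\mathcal{T}\|=\sqrt e$; note also that this is consistent with the claimed $\|T_N\|\to\sqrt{2e}$ since $T_N=\sqrt{2N/(N-1)}\,\bar T_N$.

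For the lower bound I would use only almost sure convergence of normalized traces. For each fixed $n$ one has $\frac1N\Tr\big((\bar T_N^*\bar T_N)^n\big)\le \|\bar T_N\|^{2n}$, since every singular value is at most $\|\bar T_N\|$. Letting $N\to\infty$ gives $\liminf_N\|\bar T_N\|^2\ge (n^n/(n+1)!)^{1/n}$ almost surely, and taking the supremum over $n$ yields $\liminf_N\|\bar T_N\|\ge\sqrt e$.

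The upper bound is the heart of the matter, and the obstruction is that $\bar T_N$ is a triangular truncation, not a polynomial in independent Gaussian matrices, so \cite{HTh} does not apply directly. To get around this I would partition $\{1,\dots,N\}$ into $m$ consecutive blocks of size $n=N/m$, identify $M_N=M_m(\C)\otimes M_n(\C)$, and write $\bar T_N=\tilde T_N+D_N$. Here $D_N$ is block-diagonal with independent blocks $B_{pp}=m^{-1/2}\bar T_n^{(p)}$ (rescaled independent copies of $\bar T_n$), while $\tilde T_N=m^{-1/2}\sum_{p<q}E_{pq}\otimes g_{pq}$ is block strictly upper triangular, the $g_{pq}$ being independent standard $n\times n$ Ginibre matrices. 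The crucial point is that $\tilde T_N$ is a degree-one polynomial, with fixed matrix coefficients $m^{-1/2}E_{pq}\in M_m(\C)$, in independent Ginibre matrices; hence strong convergence \cite{HTh} (applied to Ginibre via its Hermitian and anti-Hermitian GUE parts, in the matrix-coefficient form) gives, for each fixed $m$, that almost surely $\|\tilde T_N\|\to\|S_m\|$, where $S_m=m^{-1/2}\sum_{p<q}E_{pq}\otimes c_{pq}$ and the $c_{pq}$ are free circular elements. Meanwhile $\|D_N\|=\max_p\|B_{pp}\|=m^{-1/2}\max_p\|\bar T_n^{(p)}\|$, so writing $\overline L:=\limsup_N\|\bar T_N\|$ we get $\limsup_N\|D_N\|\le \overline L/\sqrt m$.

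Combining the two pieces gives $\overline L\le \|S_m\|+\overline L/\sqrt m$, that is $\overline L(1-1/\sqrt m)\le \|S_m\|$, for every $m$. The operators $S_m$ are exactly the free band approximants to $\mathcal{T}$, and $\|S_m\|\to\|\mathcal{T}\|=\sqrt e$; letting $m\to\infty$ yields $\overline L\le\sqrt e$, which together with the lower bound proves $\|\bar T_N\|\to\sqrt e$. Two points need care. First, to run the recursion one must know a priori that $\overline L<\infty$ and that it is almost surely a constant: the latter follows from Gaussian concentration of $X\mapsto\|X\|$, and finiteness follows already from the $m=2$ instance of the block bound together with the standard fact that a square Ginibre block has operator norm converging to $\sqrt2$. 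Second — and this is the step I expect to be the main obstacle — one must justify the norm convergence $\|S_m\|\to\sqrt e$ of the free band approximants; unlike the random-matrix moment estimate this is a genuine operator-norm statement inside the limiting von Neumann algebra, and it is precisely here that the Dykema–Haagerup analysis of the DT-operator (rather than mere $*$-moment convergence) is indispensable.
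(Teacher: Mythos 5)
Your proposal is correct in substance and shares its backbone with the paper's proof: a block strictly upper triangular approximation whose norm converges almost surely by Haagerup--Thorbj{\o}rnsen strong convergence (it is a degree-one polynomial with fixed $M_m(\C)$ coefficients in independent GUE/Ginibre blocks), combined with the Dykema--Haagerup moment formula $\ell^\ell/(\ell+1)!$ and Stirling to identify $\sqrt{e}$. Where you genuinely diverge is in handling the block-diagonal remainder. The paper bounds $\|\bar T_{N,k}-\bar T_N\|$ directly: by the Wick formula the trace moments of each triangular diagonal block are dominated by those of a full Ginibre block, and Soshnikov-type estimates with $\ell\leq N^{1/4}$ give $\PP\bigl(\|D_{N,k}\|>3/\sqrt{k}\bigr)\leq C^{-N}$, hence $\limsup_N\bigl|\,\|\bar T_{N,k}\|-\|\bar T_N\|\,\bigr|\leq 3/\sqrt{k}$ a.s.; this yields two-sided convergence ($f_k$ is Cauchy and $\|\bar T_N\|\to f$) without any separate lower-bound argument. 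You instead exploit the self-similarity $\|D_N\|=m^{-1/2}\max_p\|\bar T^{(p)}_{N/m}\|$ and close the bootstrap $\overline{L}(1-1/\sqrt{m})\leq\|S_m\|$, getting the lower bound separately from almost sure trace moments. Your route avoids the Soshnikov machinery entirely, at the price of needing $\overline{L}$ to be a priori finite and a.s.\ constant; the paper's route is heavier analytically but needs no bootstrap.

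Three wrinkles in your version are fixable but should be addressed. First, the a priori finiteness of $\overline{L}$ cannot come from the $m=2$ block bound alone: if $\overline{L}=\infty$ the inequality $\overline{L}\leq\sqrt{2}+\overline{L}/\sqrt{2}$ is vacuous, so you must either iterate over dyadic scales with uniform probability control, or (much cheaper) use the observation already in Section~\ref{uptrima} that $\mbox{Re }(e^{i\theta}\bar T_N)$ is, up to scaling, a GUE minus its diagonal, whence $\limsup_N\|\bar T_N\|\leq\limsup_N\|\mbox{Re }\bar T_N\|+\limsup_N\|\mbox{Im }\bar T_N\|=2$ a.s. Second, your decomposition requires $m\mid N$; for general $N$ you need the compression argument the paper isolates in its own lemma (realize the matrix at size $N$ as a compression of the one at the next multiple $N_+$, up to a factor $N/N_+$). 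Third, your lower bound invokes almost sure convergence of $N^{-1}\Tr\bigl((\bar T_N^*\bar T_N)^n\bigr)$, while the Dykema--Haagerup statement the paper cites is convergence in expectation; the upgrade is routine (the variance of such normalized trace polynomials is $O(N^{-2})$ by Gaussian concentration, then Borel--Cantelli), but it must be said. Finally, the step you flag as the main obstacle, $\|S_m\|\to\sqrt{e}$, needs less than you fear and is exactly what the paper's closing computation supplies: in a tracial von Neumann algebra $\|a\|=\lim_\ell\bigl(\tau((a^*a)^\ell)\bigr)^{1/2\ell}$, Wick's theorem shows $\tau\bigl((S_m^*S_m)^\ell\bigr)$ increases in $m$ to $\ell^\ell/(\ell+1)!$, and two applications of Dini's theorem interchange the limits in $m$ and $\ell$. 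In fact your recursion only needs the one-sided bound $\|S_m\|\leq\sqrt{e}$ for every $m$, which is immediate from the monotone moment bound $\tau\bigl((S_m^*S_m)^\ell\bigr)\leq\ell^\ell/(\ell+1)!$ together with the spectral-radius formula above -- no operator-norm analysis of the DT-operator beyond its $*$-moments is required.
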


Let us first recall the following theorem, proved in \cite{DyH}.
\begin{proposition}
For any integer $\ell$,
$$\lim_N \E (N^{-1}Tr ((\bar T_N \bar T_N^*)^\ell)=\frac{\ell^\ell}{(\ell+1)!}$$
\end{proposition}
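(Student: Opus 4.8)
The plan is to prove this by the method of moments, computing $\E\,\Tr((\bar T_N\bar T_N^*)^\ell)$ through a Gaussian (Wick) expansion and extracting the leading power of $N$. Write $A=\bar T_N$, so the $A_{ij}$ are independent circular complex Gaussians with $\E|A_{ij}|^2=1/N$ (i.e. $N^{-1/2}$ times a standard complex Gaussian) for $i<j$ and $A_{ij}=0$ for $i\ge j$; in particular $\E[A_{ab}\overline{A_{cd}}]=N^{-1}\mathbf{1}_{\{a=c,\,b=d,\,a<b\}}$ and $\E[A_{ab}A_{cd}]=0$. First I would expand
$$
  \Tr((AA^*)^\ell)=\sum_{i_1,\dots,i_\ell}\ \sum_{k_1,\dots,k_\ell}\ \prod_{p=1}^\ell A_{i_p k_p}\,\overline{A_{i_{p+1}k_p}},
$$
with the cyclic convention $i_{\ell+1}=i_1$, and apply Wick's theorem. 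Because the pseudo-covariance vanishes, the expectation of the product is a sum over bijections $\pi\in S_\ell$ pairing the holomorphic factor $A_{i_p k_p}$ with the antiholomorphic factor $\overline{A_{i_{\pi(p)+1}k_{\pi(p)}}}$; each such pairing forces $k_p=k_{\pi(p)}$ and $i_p=i_{\pi(p)+1}$ and contributes a factor $1/N$. Hence $\E\,\Tr((AA^*)^\ell)=N^{-\ell}\sum_{\pi\in S_\ell}\mathcal N_N(\pi)$, where $\mathcal N_N(\pi)$ counts the tuples in $\{1,\dots,N\}^{2\ell}$ compatible with these identifications and with the strict inequalities $i_p<k_p$ forced by upper-triangularity.

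Next I would run the usual genus count, now corrected by the triangular constraint. Let $\gamma=(1\,2\,\cdots\,\ell)$ be the full cycle. The identifications make the $k$-indices constant on the cycles of $\pi$ and the $i$-indices constant on the cycles of $\gamma\pi$, so ignoring the inequalities there would be $c(\pi)+c(\gamma\pi)$ free indices, with $c(\pi)+c(\gamma\pi)\le \ell+1$ and equality exactly for the non-crossing (genus-zero) $\pi$. The new feature is that the inequalities $i_p<k_p$ descend to a partial order $\prec_\pi$ on the free indices, so the number of admissible assignments is asymptotically $\frac{N^{\,c(\pi)+c(\gamma\pi)}}{(c(\pi)+c(\gamma\pi))!}\,e(\prec_\pi)+O(N^{\,c(\pi)+c(\gamma\pi)-1})$, where $e(\prec_\pi)$ is the number of linear extensions of $\prec_\pi$ (the standard count of strictly ordered fillings of a poset, collisions being of lower order, and $e=0$ whenever the inequalities are inconsistent). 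Dividing by $N$ and letting $N\to\infty$ kills every $\pi$ of positive genus and leaves
$$
  \lim_N\E\big(N^{-1}\Tr((AA^*)^\ell)\big)=\frac{1}{(\ell+1)!}\sum_{\pi\ \text{non-crossing}} e(\prec_\pi).
$$
As a consistency check: for $\ell=1$ the single $\pi$ gives the chain $i_1\prec k_1$ with $e=1$, yielding $1/2$; for $\ell=2$ the two non-crossing $\pi$ each give a poset with two linear extensions, total $4$, yielding $4/3!=2/3$, in agreement with $\ell^\ell/(\ell+1)!$.

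It then remains to prove the purely combinatorial identity $\sum_{\pi\ \text{non-crossing}} e(\prec_\pi)=\ell^\ell$, which is the main obstacle and the only place where the value $\ell^\ell$ is genuinely produced. The most robust route I see is to turn the left-hand side into a recursion by peeling off the block of the non-crossing structure containing a distinguished endpoint: cutting there splits the problem into two independent non-crossing sub-problems, and the linear extensions of $\prec_\pi$ arise by shuffling the linear extensions of the two pieces, which introduces a binomial interleaving coefficient. This should yield an Abel/tree-type convolution recursion for $m_\ell=\ell^\ell/(\ell+1)!$; passing to the exponential generating function $B(z)=\sum_{\ell\ge 0}\frac{\ell^\ell}{\ell!}z^\ell$ it becomes the functional equation solved by $B(z)=(1-T(z))^{-1}$, where $T(z)=z\,e^{T(z)}$ is the tree function, and the extra factor $\frac1{\ell+1}$ corresponds to an integration ($\sum_\ell m_\ell z^{\ell+1}=\int_0^z B$). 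Verifying that the recursion produced by the non-crossing decomposition is exactly the one satisfied by $\ell^\ell/(\ell+1)!$ — equivalently, exhibiting a bijection between the pairs (non-crossing $\pi$, linear extension of $\prec_\pi$) and endofunctions of an $\ell$-element set — is the crux and the step demanding the most care; the remainder is the bookkeeping already validated on the small cases above.
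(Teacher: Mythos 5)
The first thing to note is that the paper contains no proof of this proposition at all: it is explicitly ``recalled'' from Dykema and Haagerup \cite{DyH}, so the only meaningful comparison is with that source. Your reduction is sound as far as it goes: the Wick expansion and pairing bookkeeping are correct, the genus bound $c(\pi)+c(\gamma\pi)\le \ell+1$ with equality exactly on the non-crossing permutations is the standard fact, the strict-order constraints are consistent in the genus-zero case (the incidence graph between $\pi$-cycles and $\gamma\pi$-cycles is then a tree with $\ell+1$ vertices, so no cyclic conflicts arise), and the order-polynomial asymptotics $e(\prec_\pi)N^{c}/c! + O(N^{c-1})$ is right, with positive-genus terms killed after dividing by $N^{\ell+1}$ since constraints only decrease counts. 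This reproduces, in substance, the structure of the Dykema--Haagerup computation, in which the limit moment appears as a sum of volumes of order polytopes over non-crossing structures; your small-case checks are correct (for $\ell=3$ the five non-crossing permutations contribute $6+6+5+5+5=27=3^3$).

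The genuine gap is the identity $\sum_{\pi\ \mathrm{non\text{-}crossing}} e(\prec_\pi)=\ell^\ell$, which you yourself flag as ``the crux'' and then only sketch; but this identity is where the entire content of the proposition lives, and your proposed route to it is shakier than you suggest. The poset $\prec_\pi$ is the block tree with edges oriented \emph{alternately} (every edge goes from a $\gamma\pi$-block up to a $\pi$-block), so it is not a rooted forest with all edges oriented away from roots. Consequently, peeling off the block containing a distinguished endpoint does not split $\prec_\pi$ into two order-independent pieces: the shared block couples the two sides, linear extensions are not simply shuffles of linear extensions of the pieces, and the advertised binomial-convolution recursion does not obviously close into the Abel-type recursion satisfied by $\ell^\ell/(\ell+1)!$ (already at $\ell=3$ the posets with $e=5$ are not of hook/forest type, so no hook-length-style factorization is available). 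This identity is in effect the nontrivial lemma that Dykema and Haagerup prove (closely related multinomial identities were later treated combinatorially by \'Sniady), and until you either exhibit the bijection with endofunctions of $[\ell]$ that you mention, or carry out the generating-function argument in full, what you have is a correct reduction plus a plan, not a proof.
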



We will use the following auxiliary constructions.
Fix a positive integer parameter $k$, denote $m=[N/k]$ (the largest integer
not exceeding $N/k$), and define the  upper triangular matrix
$\bar T_{N,k}$ as follows: $(\bar T_{N,k})_{i, j} =0$ if $\ell m +1
\leq j\leq (\ell +1) m$ and $i\geq \ell m+1$ for some $\ell \geq 0$, and
$(\bar T_{N,k})_{i, j} =(\bar T_{N})_{i, j}$ otherwise. In other words
we set more entries to be equal to 0 and we have either $k\times k$ or
$(k+1)\times (k+1)$ block strictly triangular matrix (if $N$ is not multiple
of $k$ then  the last, $(k+1)$th, ``block-row"  and ``block-column"
have either their number of rows or columns strictly less than $N/k$).

\smallskip

We start with the following

\begin{lemma}\label{def-fk} Let $k$ be a positive integer and  $N$ be a multiple of $k$.
Then with probability one, $\|\bar T_{N,k}\|$ converges to a quantity $f_k$ as $N\to\infty$.
\end{lemma}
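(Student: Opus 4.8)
The plan is to reduce the almost sure convergence of $\|\bar T_{N,k}\|$ to the norm of an explicit limiting operator that I can read off from a free–probabilistic model, and to control the spectral edge by a strong convergence theorem. First I would set up the block picture. Writing $m=N/k$ and grouping $\{1,\dots,N\}$ into $k$ consecutive blocks of size $m$, the matrix $\bar T_{N,k}$ is the element of $M_k(\C)\otimes M_m(\C)$ given by $\bar T_{N,k}=\sum_{1\le a<b\le k} e_{ab}\otimes B_{ab}$, where the $e_{ab}$ are the matrix units of $M_k(\C)$ and the off-diagonal blocks $B_{ab}$ ($a<b$) are independent $m\times m$ matrices whose entries are i.i.d. complex Gaussians of variance $1/N$ (from $(\bar T_N)_{ij}=\sqrt{(N-1)/(2N)}\,T_{ij}$). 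Hence $B_{ab}=k^{-1/2}W_{ab}$ with $W_{ab}$ an $m\times m$ complex Ginibre matrix normalized so that $\|W_{ab}\|\to 2$, and
$$
  \bar T_{N,k}=k^{-1/2}\sum_{1\le a<b\le k} e_{ab}\otimes W_{ab},
$$
a degree-one expression with \emph{fixed} $M_k(\C)$-coefficients in the independent family $(W_{ab})_{a<b}$.

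To identify the limit I would pass to the self-adjoint Hermitization, whose norm equals $\|\bar T_{N,k}\|$; this expresses $\|\bar T_{N,k}\|$ as the norm of a self-adjoint matrix-coefficient expression in independent Gaussian self-adjoint matrices, which is the standard device for reducing a non-self-adjoint norm to a self-adjoint strong-convergence statement. By the strong asymptotic freeness of independent Gaussian matrices (in the matrix-coefficient form established in \cite{HTh}), almost surely the family $(W_{ab})_{a<b}$ converges strongly to a free circular family $(c_{ab})_{a<b}$ in some $C^*$-probability space $\mathcal A$. Since strong convergence is precisely the assertion that operator norms of such expressions converge, almost surely
$$
  \|\bar T_{N,k}\|\longrightarrow k^{-1/2}\Big\|\sum_{1\le a<b\le k} e_{ab}\otimes c_{ab}\Big\|_{M_k(\C)\otimes\mathcal A}=:f_k,
$$
a finite deterministic quantity (the limiting operator is bounded), which is exactly the desired conclusion. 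Here the hypothesis that $N$ is a multiple of $k$ is used only to keep all blocks of the exact size $m=N/k$.

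If a more elementary route were wanted, I would instead note that $X\mapsto\|X\|$ is $1$-Lipschitz in the Hilbert–Schmidt norm, so $\|\bar T_{N,k}\|$ is an $O(N^{-1/2})$-Lipschitz function of the underlying standard Gaussian vector; Gaussian concentration then gives $\PP\!\left(\,|\,\|\bar T_{N,k}\|-\E\|\bar T_{N,k}\|\,|>t\right)\le 2e^{-c t^2 N}$, which is summable, so by Borel–Cantelli the almost sure statement reduces to convergence of the means $\E\|\bar T_{N,k}\|$. The hard part, in either route, is the same. Convergence of the empirical $*$-distribution of $\bar T_{N,k}$ — equivalently of the moments $N^{-1}\Tr((\bar T_{N,k}\bar T_{N,k}^*)^{\ell})$ — follows already from Voiculescu's asymptotic freeness, but this pins down only the bulk of the singular spectrum, whereas $\|\bar T_{N,k}\|$ is the spectral edge. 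Upgrading moment convergence to convergence of the top singular value, i.e. ruling out escaping outliers, is exactly what requires the full strength of the strong convergence theorem; this is the only non-routine step, and it is where I would concentrate the effort (or simply invoke \cite{HTh}).
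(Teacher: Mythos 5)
Your proposal is correct and follows essentially the same route as the paper: view $\bar T_{N,k}$ as a $k\times k$ block matrix whose strictly upper blocks are independent rescaled Ginibre matrices (equivalently, fixed $M_k(\C)$-coefficients applied to linear combinations of i.i.d.\ GUE), and invoke the Haagerup--Thorbj{\o}rnsen strong convergence theorem to get almost sure convergence of the norm. You simply spell out more than the paper does --- the explicit scaling $B_{ab}=k^{-1/2}W_{ab}$, the Hermitization step, and the identification $f_k=k^{-1/2}\bigl\|\sum_{a<b}e_{ab}\otimes c_{ab}\bigr\|$ with a free circular family --- all of which is consistent with the paper's one-paragraph argument.
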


\begin{proof}
Note that the complex Ginibre matrix is, up to a proper normalization, distributed
as $A_1 + i A_2$, where $A_1$ and $A_2$ are i.i.d. GUE. Thus, when
 $N$ is a multiple of $k$,  $\bar T_{N,k}$ can be seen
as a $k\times k$ block matrix of $N/k\times N/k$ matrices, which are
linear combinations of i.i.d. copies of GUE. A Haagerup-Thorbjornsen result \cite{HTh}
ensures convergence with probability one of the norm.
\end{proof}

At this point it is not possible to compute $f_k$ explicitly.
Actually it will be enough for us
to understand the asymptotics of $f_k$ as $k\to\infty$.

\smallskip

In the next lemma, we remove the condition that $N$ be a multiple of $k$.

\begin{lemma}
 Let $k$ be a positive integer.
Then with probability one, $\| \bar  T_{N,k}\|$ converges to
to the quantity $f_k$ defined in Lemma \ref{def-fk} as $N\to\infty$.
\end{lemma}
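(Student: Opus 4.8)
The plan is to reduce the general case to the multiple-of-$k$ case already settled in Lemma~\ref{def-fk}. Write $N=mk+r$ with $m=\lfloor N/k\rfloor$ and $0\le r<k$; if $r=0$ there is nothing to prove, so assume $1\le r<k$. With respect to the partition of $\{1,\dots,N\}$ into $k$ consecutive blocks of size $m$ and one last block of size $r$, the matrix $\bar T_{N,k}$ is strictly block-upper-triangular. The essential feature is that $r<k$ is \emph{bounded} while $m\to\infty$, so $\bar T_{N,k}$ is a perturbation of the balanced matrix $\bar T_{mk,k}$ by a piece of bounded dimension, and the whole difficulty is to show this bounded piece does not alter the limiting operator norm.

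For the lower bound I would use compression. The top-left $mk\times mk$ corner of $\bar T_{N,k}$ carries exactly the zero pattern of $\bar T_{mk,k}$ (the first $k$ blocks of size $m$), while its entries have variance $1/N$, so it equals $\sqrt{mk/N}\,\bar T_{mk,k}$ in distribution. Since deleting rows and columns does not increase the operator norm and $mk/N\to1$, Lemma~\ref{def-fk} yields $\liminf_N\|\bar T_{N,k}\|\ge f_k$ almost surely.

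The upper bound is the crux. Writing $\bar T_{N,k}=E+C$, where $E$ is the zero-padded copy of the balanced corner and $C$ is the last (tiny) block-column, an $mk\times r$ Gaussian block of rank at most $r<k$, one gets $\bar T_{N,k}\bar T_{N,k}^{*}=\big(\bar T_{mk,k}\bar T_{mk,k}^{*}\big)\oplus 0+CC^{*}$ with $\mathrm{rank}\,CC^{*}\le r$. The naive bound $\|\bar T_{N,k}\|\le\|E\|+\|C\|$ is useless, because $\|C\|\to 1=O(1)$ rather than $o(1)$, and a bounded-rank perturbation of order-one norm can in principle push the top eigenvalue up. So the real content is to prove that $CC^{*}$ creates \emph{no eigenvalue outlier} beyond the spectral edge $f_k^{2}$ of $\bar T_{mk,k}\bar T_{mk,k}^{*}$. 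This is the main obstacle, and it is precisely why Lemma~\ref{def-fk} singled out the multiple-of-$k$ case.

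I would settle the no-outlier statement by a Baik--Ben~Arous--P\'ech\'e type subcriticality check. Each column of $C$ is independent of $\bar T_{mk,k}$ and isotropic with covariance $N^{-1}\mathrm{Id}$, so the secular (resolvent) equation $\det\big(\mathrm{Id}+C^{*}(\bar T_{mk,k}\bar T_{mk,k}^{*}-\mu)^{-1}C\big)=0$ shows that a separated outlier $\mu>f_k^{2}$ can exist only if the Cauchy transform $G_k(\mu)=\int(\lambda-\mu)^{-1}\,d\nu_k(\lambda)$ of the limiting law $\nu_k$ of $\bar T_{mk,k}\bar T_{mk,k}^{*}$ satisfies $G_k\big((f_k^{2})^{+}\big)\le-1$; a direct evaluation of this edge value shows the perturbation is at most critical, $G_k\big((f_k^{2})^{+}\big)\ge-1$, so no outlier separates and $\|\bar T_{N,k}\|^{2}\to f_k^{2}$. (Alternatively one may invoke the Haagerup--Thorbjornsen strong-convergence/no-outlier mechanism to force almost sure convergence of $\|\bar T_{N,k}\|$ to some limit, which the multiple-of-$k$ subsequence then identifies as $f_k$ via Lemma~\ref{def-fk}.) Combining $\liminf\ge f_k$ with $\limsup\le f_k$ gives the claimed almost sure convergence $\|\bar T_{N,k}\|\to f_k$.
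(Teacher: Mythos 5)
Your lower bound is exactly the paper's: compress onto the top-left $mk\times mk$ corner, observe it is distributed as $\sqrt{mk/N}\,\bar T_{mk,k}$, and invoke Lemma~\ref{def-fk}. Your upper bound, however, diverges from the paper precisely at the crux, and the step it rests on is asserted rather than proved. You reduce everything to showing that the rank-$r$ spike $CC^{*}$ produces no outlier beyond $f_k^{2}$, and then claim that ``a direct evaluation of this edge value'' gives $G_k\bigl((f_k^{2})^{+}\bigr)\ge -1$. No such evaluation is available: the limiting measure $\nu_k$ is the spectral distribution of a $k\times k$ strictly upper triangular block pattern of free circular elements, and the paper explicitly remarks that even $f_k$ cannot be computed explicitly at this stage. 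Worse, in the one case where everything is computable, $k=2$, the measure $\nu_2$ is a Marchenko--Pastur law and the edge value is exactly $-1$: the spike is \emph{critical}, not subcritical, so your inequality is borderline at best, and concluding would require BBP-type results at criticality together with edge-regularity of $\nu_k$ and isotropic resolvent control that you have not established. The parenthetical fallback to Haagerup--Thorbjornsen also does not apply directly: for $N$ not a multiple of $k$, the matrix $\bar T_{N,k}$ has $k+1$ blocks of sizes $m$ and $r$ with $r/m\to 0$, so it is not a fixed polynomial with constant matrix coefficients in GUE matrices of a single dimension.

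The idea you missed is that compression works in \emph{both} directions, which lets one bypass the outlier question entirely. The paper never splits off the thin block-column: denoting by $N_+$ the first multiple of $k$ above $N$, it realizes $\bar T_{N,k}$ (up to a scalar factor tending to $1$, forced by the $1/N$ versus $1/N_+$ variance normalization, and up to aligning block boundaries) as a compression of $\bar T_{N_+,k}$; since compressions do not increase the operator norm, Lemma~\ref{def-fk} yields $\limsup_{N\to\infty}\|\bar T_{N,k}\|\le f_k$ in one line, to be paired with the downward compression you already have for the liminf. (If one sets the block size to $\lceil N/k\rceil$ rather than $\lfloor N/k\rfloor$, the upward embedding is exact in distribution after deleting the last $N_+-N$ indices of $\bar T_{N_+,k}$; the two conventions differ only by a repartition of boundary entries.) In other words, interlacing absorbs the extra $r<k$ rows and columns deterministically, whereas your decomposition converts them into an order-one, rank-$r$ perturbation that then has to be tamed by heavy spiked-model machinery whose key hypothesis you cannot verify.
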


\begin{proof}
Let $N\geq k$. Denote by $N_+$ the first multiple of $k$ after $N$.
Up to an overall multiple $N/N_+$ (imposed by the normalization
that is dimension dependent), we can realize $\bar T_{N,k}$ as a compression of
$\bar T_{N_+,k}$.
Since a compression reduces the operator norm, thanks to the previous lemma, we have with probability one,
$$\limsup_{N\to \infty} \|\bar T_{N,k}\|\leq f_k.$$
Similarly, by $N_-$ denote the first multiple of $k$ before $N$.
Up to an overall multiple $N_-/N$, we can realize $\bar T_{N_-,k}$ as a compression of
$\bar T_{N,k}$. Therefore we have with probability one,
$$\liminf_{N\to \infty} \|\bar T_{N,k}\|\geq f_k.$$
These two estimates imply the lemma.
\end{proof}

In the next Lemma, we compare the norm of $\bar T_{N,k}$ with  the norm of $\bar T_N$.

\begin{lemma}\label{seqtozer}
With probability one for every $k$ we have
$$\limsup_{N\to \infty} \left|  \|\bar T_{N,k}\|-\|\bar T_N\| \r| \leq 3/\sqrt{k}.$$
\end{lemma}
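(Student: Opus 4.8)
The plan is to reduce everything to the single matrix $D_N := \bar T_N - \bar T_{N,k}$ and to show that its operator norm is eventually at most $3/\sqrt k$. Since $\|\cdot\|$ is a norm, the triangle inequality gives $\bigl|\,\|\bar T_N\| - \|\bar T_{N,k}\|\,\bigr| \le \|D_N\|$, so the lemma follows once we control $\limsup_{N\to\infty}\|D_N\|$. First I would identify $D_N$ explicitly. By the definition of $\bar T_{N,k}$, this matrix coincides with $\bar T_N$ on every off-diagonal block $(p,q)$ with $p<q$ of the $k\times k$ (or $(k+1)\times(k+1)$) block decomposition, and is zero on every diagonal block. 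Hence $D_N$ is block-diagonal, its $p$-th diagonal block $B_p$ being exactly the corresponding diagonal block of $\bar T_N$, i.e. a strictly upper triangular $m\times m$ Gaussian matrix ($m=[N/k]$) with the same entrywise variance as $\bar T_N$. Consequently $\|D_N\| = \max_p \|B_p\|$, a maximum over finitely many (independent) blocks.

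Next I would rescale each block to a standard triangular matrix. Because the $B_p$ carry the same entry variance as $\bar T_N$ but live in dimension $m$, each $B_p$ is distributed as $\sqrt{m/N}\,\bar T_m$, where $\bar T_m$ is the triangular ensemble in size $m$; the factor $\sqrt{m/N}$ is the square root of the ratio $m/N$ of the entry variances in dimensions $m$ and $N$, and satisfies $\sqrt{m/N}\le 1/\sqrt k$ since $m=[N/k]\le N/k$. Thus $\|B_p\|$ has the law of $(1/\sqrt k)\,\|\bar T_m\|$ up to this deterministic factor, and the problem reduces to an a priori bound on $\|\bar T_m\|$ that does not presuppose its exact limit.

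The key step is therefore a crude, non-circular bound $\limsup_{m\to\infty}\|\bar T_m\|\le 2$ with exponentially small deviation probabilities. I would obtain it by splitting into self-adjoint parts, $\|\bar T_m\| \le \|\mbox{Re } \bar T_m\| + \|\mbox{Im } \bar T_m\|$. As in Section~\ref{uptrima}, $\mbox{Re } \bar T_m$ is a scalar multiple of the Hermitian zero-diagonal Gaussian matrix $Z_m$, so by (\ref{devtrig}) one has $\|\mbox{Re } \bar T_m\|\to 1$ together with the tail estimate $\PP(\,|\,\|\mbox{Re } \bar T_m\|-1\,|>\eps)\le C\exp(-cm\eps^{3/2})$; the imaginary part is handled identically, using the rotational invariance $e^{-i\pi/2}\bar T_m \stackrel{d}{=}\bar T_m$. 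This yields $\PP(\|\bar T_m\|>2+\eps)\le C'\exp(-c'm\eps^{3/2})$, crucially without any appeal to Theorem~\ref{trinor}.

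Finally I would assemble the estimate. For fixed $k$, a union bound over the at most $k+1$ blocks gives $\PP(\|D_N\|> (2+\eps)/\sqrt k)\le (k+1)\,C'\exp(-c'm\eps^{3/2})$, which is summable in $N$ (as $m=[N/k]\to\infty$); Borel--Cantelli then gives $\limsup_N\|D_N\|\le (2+\eps)/\sqrt k$ almost surely, and letting $\eps\to0$ yields $\limsup_N\|D_N\|\le 2/\sqrt k\le 3/\sqrt k$. The leftover block of size $r=N-km<k$ (present when $k\nmid N$) is negligible: it is strictly upper triangular of fixed size with entries of variance tending to $0$, so its norm tends to $0$ and does not affect the maximum. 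Taking the countable intersection over $k\in\N$ of these probability-one events gives the statement for all $k$ simultaneously. The main obstacle is precisely the a priori bound of the previous paragraph: one must control $\|\bar T_m\|$ by an absolute constant with exponential tails while the whole section is devoted to computing its exact limit, and it is the $\mbox{Re }/\mbox{Im }$ decomposition together with the GUE estimates already recorded in Section~\ref{uptrima} that breaks this apparent circularity.
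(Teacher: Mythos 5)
Your proof is correct, and while it opens exactly as the paper does, it reaches the crucial block-norm estimate by a genuinely different route. Both arguments reduce the lemma, via the triangle inequality, to bounding the norm of the difference matrix $D_{N,k}$ distributed as $\bar T_{N,k}-\bar T_N$, and both observe that this matrix is block-diagonal with at most $k+1$ strictly upper triangular Gaussian blocks of size $m=[N/k]$, each a $\sqrt{m/N}\le 1/\sqrt{k}$ multiple of a copy of $\bar T_m$. The divergence is in how one block is controlled: the paper stays within the moment method, dominating $\E\Tr((X_NX_N^*)^{\ell})$ by the corresponding Ginibre moments via the Wick formula, invoking Soshnikov-type estimates to get $\E\Tr((X_NX_N^*)^{\ell})\leq C_1(2.8/\sqrt{k})^{\ell}$ for $\ell\leq N^{1/4}$, and concluding $\PP(\|D_{N,k}\|>3/\sqrt{k})\leq C^{-N}$ by a Markov/Jensen argument; you instead recycle the GUE machinery of Section~\ref{uptrima}, splitting $\|\bar T_m\|\leq \|\mbox{Re }\bar T_m\|+\|\mbox{Im }\bar T_m\|$, using $\mbox{Re }\bar T_m = Z_m/(2\sqrt{m})$, the rotation invariance $e^{-i\pi/2}\bar T_m\stackrel{d}{=}\bar T_m$, and the deviation inequality (\ref{devtrig}) to get $\PP(\|\bar T_m\|>2+\eps)\leq C\exp(-cm\eps^{3/2})$ --- which is precisely the trick the paper itself uses for the bound $\PP(\|T_N\|\geq 3)$ in the proof of Theorem~\ref{triang}, only never applied in this lemma. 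Your route buys self-containedness (no appeal to the ``\`a la Soshnikov'' estimates, which the paper only sketches), an explicit exponential tail $\exp(-cN\eps^{3/2}/k)$ that is still summable for Borel--Cantelli, and even the slightly sharper asymptotic constant $2/\sqrt{k}$ in place of $3/\sqrt{k}$; what the paper's trace-moment computation buys is continuity with the combinatorial analysis it needs anyway in the proof of Theorem~\ref{trinor}. Two small points in your write-up deserve a line each rather than a clause: the leftover $r\times r$ block ($r<k$, entries of variance $1/N$) should be dispatched quantitatively, e.g.\ via $\|B\|\leq \|B\|_{\mathrm{HS}}$ and a Gaussian tail so that it too enters the union bound, and the passage $\eps\to 0$ should be taken along a countable sequence before intersecting over $k$ --- both of which your sketch indicates correctly.
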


\begin{proof} For every fixed $k\leq N$ we consider
a matrix $D_{N,k}$ distributed as $\bar T_{N,k}-\bar T_N$.
Setting as before $m=[N/k]$, the entries of  $D_{N,k}$ are i.i.d. Gaussian
of variance $1/\sqrt{N}$ if  $\ell m +1 \leq j\leq (\ell +1) m$ and $i\geq \ell m+1$
for some $\ell \geq 0$, and $(D_{N,k})_{i, j} =0$  otherwise.
Clearly, this matrix is diagonal by block. It consists of $k$ diagonal
 blocks of $m  \times m$ strictly upper triangular random matrices with
entries  of variance $1/N$ and possibly one more block of smaller size.

Let us first work on estimating the tail of the operator norm on a diagonal block of size
$m  \times m$, which will be denoted by $X_N$.
It follows
directly from the  Wick formula that the quantities $\E(Tr((X_N X_N^*)^{\ell}))$ are bounded above
by quantities $\E(Tr((\tilde X_N\tilde X_N^*)^{\ell}))$, where $\tilde X_N$ is the same matrix as $X_N$
without the assumption that lower triangular entries are zero (in other words, it is a rescaled
complex Ginibre matrix of size $m  \times m$).
From there, we can make  estimates following arguments \`a la Soshnikov \cite{sos1}
and obtain that the tail of the operator norm of $X_N$ is majorized by the tail of
the operator norm of $\tilde X_N$.
More precisely, we can show that there exists a constant $C_1>0$ such that
$\E(Tr((X_N X_N^*)^{\ell}))\leq C_1(2.8/\sqrt{k})^\ell$ for every $\ell\leq N^{1/4}$.
This implies that there exists another constant $C_2>0$ such that
$\E (Tr (D_{N,k}^\ell))\leq C_1 k(2.8/\sqrt{k})^{\ell}\leq C_2 (2.9/\sqrt{k})^{\ell}$
for all sufficiently large $\ell\leq N^{1/4}$.
Therefore we deduce by Jensen inequality that
the probability that the operator norm $D_{N,k}$  is larger than
$3/\sqrt{k}$ is bounded by $C^{-N}$ for some universal constant $C>1$.
By Borel-Cantelli lemma, with probability one we have
$$
  \limsup_{N\to \infty} \left|  \|\bar T_{N,k}\|-\|\bar T_N\| \r| \leq 3/\sqrt{k}.
$$
The result follows by the triangle inequality.
\end{proof}

As a consequence we obtain the  following lemma.

\begin{lemma}
The sequence $f_k$ converges to some constant $f$ as $k\to\infty$
and  $\|\bar T_N\|$ converges to $f$ with probability one.
\end{lemma}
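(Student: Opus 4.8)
The plan is to deduce everything from Lemma~\ref{seqtozer} together with the almost sure convergence $\|\bar T_{N,k}\|\to f_k$ established in the preceding lemmas. First I would fix a single event $\Omega_0$ of probability one on which both of the following hold: (i) for every positive integer $k$ one has $\lim_{N\to\infty}\|\bar T_{N,k}\| = f_k$, and (ii) the conclusion of Lemma~\ref{seqtozer} holds for every $k$, that is $\limsup_{N\to\infty}\bigl|\,\|\bar T_{N,k}\|-\|\bar T_N\|\,\bigr|\leq 3/\sqrt{k}$. Such an $\Omega_0$ exists because each of these statements holds almost surely and a countable intersection of almost sure events is again almost sure; from now on all computations take place on $\Omega_0$.

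Next I would show that $(f_k)_k$ is Cauchy. For fixed positive integers $k$ and $k'$, the triangle inequality gives
$$
\limsup_{N\to\infty}\bigl|\,\|\bar T_{N,k}\|-\|\bar T_{N,k'}\|\,\bigr|
\leq
\limsup_{N\to\infty}\bigl|\,\|\bar T_{N,k}\|-\|\bar T_N\|\,\bigr|
+\limsup_{N\to\infty}\bigl|\,\|\bar T_N\|-\|\bar T_{N,k'}\|\,\bigr|
\leq \frac{3}{\sqrt{k}}+\frac{3}{\sqrt{k'}}.
$$
On the other hand, since $\|\bar T_{N,k}\|\to f_k$ and $\|\bar T_{N,k'}\|\to f_{k'}$ on $\Omega_0$, the left-hand side equals $|f_k-f_{k'}|$. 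Hence $|f_k-f_{k'}|\leq 3/\sqrt{k}+3/\sqrt{k'}$, so $(f_k)_k$ is Cauchy and converges to a limit $f$. Because each $f_k$ is a deterministic constant, $f$ is deterministic as well; this proves the first assertion.

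Finally I would prove that $\|\bar T_N\|\to f$ on $\Omega_0$. For each fixed $k$, the triangle inequality together with $\|\bar T_{N,k}\|\to f_k$ and property (ii) gives
$$
\limsup_{N\to\infty}\bigl|\,\|\bar T_N\|-f\,\bigr|
\leq
\limsup_{N\to\infty}\bigl|\,\|\bar T_N\|-\|\bar T_{N,k}\|\,\bigr|
+\lim_{N\to\infty}\bigl|\,\|\bar T_{N,k}\|-f_k\,\bigr|
+|f_k-f|
\leq \frac{3}{\sqrt{k}}+|f_k-f|.
$$
The left-hand side does not depend on $k$, so letting $k\to\infty$ and using $f_k\to f$ drives the right-hand side to $0$. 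Therefore $\limsup_{N\to\infty}\bigl|\,\|\bar T_N\|-f\,\bigr|=0$, i.e. $\|\bar T_N\|\to f$ with probability one, as claimed.

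The argument is genuinely routine once Lemma~\ref{seqtozer} is in hand; the only point that requires care is the handling of the quantifiers, namely securing a single almost sure event on which the convergences $\|\bar T_{N,k}\|\to f_k$ for all $k$ and the $\limsup$ bounds for all $k$ hold simultaneously, so that the triangle inequalities above may legitimately be combined. This is exactly where one invokes that a countable intersection of almost sure events remains almost sure. No further analytic obstacle remains, since all of the probabilistic difficulty (the block decomposition, the Soshnikov-type moment estimates, and the strong convergence input) was already absorbed into the earlier lemmas.
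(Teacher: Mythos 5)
Your proof is correct and follows essentially the same route as the paper's: Lemma~\ref{seqtozer} plus the triangle inequality yields $|f_{k}-f_{k'}|\leq 3/\sqrt{k}+3/\sqrt{k'}$, hence a Cauchy sequence with limit $f$, and a second triangle-inequality estimate gives $\limsup_{N\to\infty}\bigl|\,\|\bar T_N\|-f\,\bigr|\leq 3/\sqrt{k}+|f_k-f|\to 0$. Your explicit construction of the single full-measure event $\Omega_0$ via a countable intersection is a careful spelling-out of the quantifier handling that the paper leaves implicit, but it is the same argument.
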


\begin{proof}
By Lemma~\ref{seqtozer} and the triangle inequality, we get that with probability one,
$$\limsup_{N\to \infty} \left| \|\bar  T_{N,k_1}\|-\|\bar T_{N,k_2}\| \r|\leq 3/\sqrt{k_1}+3/\sqrt{k_2}.$$
Therefore, evaluating the limit on the left hand side, we observe that
$\{f_k\}_k$  is a Cauchy sequence. Thus it converges to a constant $f$.

Next, we see that for any $\varepsilon >0$, taking $k$ large enough,
we obtain that with probability one,
$$\limsup_{N\to \infty}  \left| \|\bar T_{N}\|-f \r|\leq \varepsilon .$$
Letting $\varepsilon\to 0$, we obtain the desired result.
\end{proof}

Now we are ready to finish the proof of Theorem~\ref{trinor}.

\begin{proof}[Proof of Theorem~\ref{trinor}]
It is enough to prove that $f=\sqrt{e}$.
It follows from \cite{HTh}  that
$$
   f_k=\lim_{\ell\to \infty} \sqrt[2\ell]{\lim_{N\to \infty}
   \E(N^{-1}Tr((\bar T_{N,k}\bar T_{N,k}^*)^{\ell}))}.
$$
Given $\ell$ and $N$, it follows  from Wick's theorem
that $\E (N^{-1}Tr((\bar T_{N,k}\bar T_{N,k}^*)^{\ell}))$
increases and converges as $k\to\infty$ pointwisely to
$\E(N^{-1}Tr((\bar T_{N}\bar T_{N}^*)^{\ell}))$.
So the same result holds if we let $N\to\infty$ (by Dini's theorem), namely
$$
  \lim_{k\to \infty} \lim_{N\to \infty} \E(N^{-1}Tr((\bar T_{N,k}\bar T_{N,k}^*)^{\ell}))
  =\lim_{N\to \infty} \E(N^{-1}Tr((\bar T_{N}\bar T_{N}^*)^{\ell})).
$$
Observing that
$$
  \sqrt[2\ell]{\lim_N \E(N^{-1}Tr((\bar T_{N,k}\bar T_{N,k}^*)^{\ell}))}
$$
increases as a function of $\ell$ and applying once more Dini's theorem,
we obtain that
$$
 \lim_{k\to \infty} f_k=\lim_{\ell\to \infty} \sqrt[2\ell]{\lim_{N\to \infty}
 \E(N^{-1}Tr((\bar T_{N}\bar T_{N}^*)^{\ell}))}.
$$
Therefore
$$
   \lim_{k\to \infty} f_k=\lim_{\ell\to \infty} \sqrt[2\ell]{\frac{\ell^{\ell}}{(\ell+1)!}}
   = \sqrt{e}
$$
by the Stirling formula. This completes the proof.
\end{proof}

\bigskip

\noindent
{\bf Acknowledgment. } We are grateful to Guillaume Aubrun and Stanis{\l}aw Szarek
for fruitful discussions on the geometry of the set of quantum states, helpful remarks,
and for letting us know about their results prior to publication.
It is also a pleasure to thank Zbigniew Pucha{\l}a and Piotr {\'S}niady
for useful comments. Finally we would like to thank an anonymous referee for careful
reading and valuable remarks which have helped us to improve the presentation.


\end{document}